\DeclareMathOperator{\E}{\mathbb{E}}
\theoremstyle{definition}
\newtheorem{thm}{Theorem}[section]
\newtheorem{Cor}[thm]{Corollary}
\newtheorem{prop}[thm]{Proposition}
\newtheorem{lemma}[thm]{Lemma}
\newcommand{\lr}[1]{\langle #1 \rangle}
\begin{document}

\title{Symmetry of Endomorphism Algebras}
\author{Adam Allan}
\address{Dept. of Mathematics, St. Louis University, St. Louis, MO 63103}
\ead{aallan@slu.edu}

\begin{keyword}
Symmetric algebra \sep Frobenius \sep Endomorphism algebra \sep Dihedral groups \sep Nakayama algebras \sep Hecke algebras
\end{keyword}

\begin{abstract}
Motivated by recent problems regarding the symmetry of Hecke algebras, we investigate the symmetry of the endomorphism algebra $E_P(M)$ for $P$ a $p$-group and $M$ a $kP$-module with $k$ a field of characteristic $p$. We provide a complete analysis for cyclic $p$-groups and the dihedral 2-groups. For the dihedral 2-groups, this requires the classification of the indecomposable modules in terms of string modules and band modules. We generalize our techniques to consider $E_{\Lambda}(M)$ for $\Lambda$ a Nakayama algebra, a local algebra, or even an arbitrary algebra.
\end{abstract}

\maketitle

\section{Background}

If $G$ is a finite group with subgroup $H$ and $k$ a field, then the endomorphism algebra $E_G(k_H\uparrow^G)$ of the permutation $kG$-module $k_H\uparrow^G$ is referred to as a Hecke algebra. It is not known in general when $E_G(k_H\uparrow^G)$ is symmetric or quasi-Frobenius, but this condition was explored in \cite{GreenSawada}, and proved useful in \cite{Naehrig} where $H$ was assumed to be a Sylow $p$-subgroup of $G$. Moreover, the centralizer algebras $kG^H$ for $H$ a subgroup of $G$ are Hecke algebras of the form $E_{H \times G}(k_{\Delta H}\uparrow^{H \times G})$ and have been recently studied in \cite{Ellers00}, \cite{Ellers04}, \cite{Ellers10}, and \cite{AAA}, and the author and J. Murray have been engaged in finding necessary and sufficient conditions guaranteeing symmetry of $kG^H$. For $H = G$ we have $kG^G = Z(kG)$ and in \cite{Muller} it was established that $Z(kG)$ is symmetric precisely when $G$ is $p$-nilpotent with abelian Sylow $p$-subgroups. It is natural, therefore, to analyze separately the case of $kG^H$ for $G$ a $p$-nilpotent group. This analysis, carried out in Theorem \ref{pNilpotent} below, led us to consider the separate problem of when $E_P(M)$ is symmetric for $P$ a $p$-group and $M$ a $kP$-module. It is this latter problem that we shall investigate more fully in this paper.

The paper will develop as follows. In section 2 we will review the necessary facts about symmetric algebras, analyze the symmetry of $kG^H$ for $G$ a $p$-nilpotent group, and briefly consider the symmetry of $E_P(M)$ for $P$ a cyclic $p$-group and $M$ a $kP$-module. In section 3, we analyze the symmetry of $E_P(M)$ for $P$ a dihedral 2-group, $k$ a field of characteristic 2, and $M$ an indecomposable $kP$-module. This will require the classification of indecomposable $kP$-modules in terms of band and string modules. A similar analysis is provided in section 4 for the Nakayama algebras, which generalizes our results for cyclic $p$-groups and has implications for the case of blocks with cyclic defect groups. Lastly, we provide some results of a general nature in section 5, with particular focus paid to local algebras. The results in this section neatly tie together some of the patterns observer earlier in the paper, while also pointing towards possible future research.

\textbf{Notation:} We denote finite groups by $G$, $H$, etc, $p$-groups by $P$, $Q$, and $R$, and modules by $M$ and $N$. Algebras are assumed to be finite dimensional over a ground field $k$, associative, and with identity; ${_{\Lambda}\text{mod}}$ denotes the finitely generated left $\Lambda$-modules; and our field $k$ is assumed to be algebraically closed of characteristic $p \geq 0$. A module is said to be \textit{isotypic} if all of its indecomposable direct summands are isomorphic, and the Loewy length $\ell\ell(M)$ of a module $M$ is defined to be the smallest $d$ for which $J^d(M) = 0$. Throughout this paper we write $E_{ij}$ for the matrix with zeroes everywhere except a 1 in the $i$th row and $j$th column; the size of $E_{ij}$ will be clear from context. Lastly, given matrices $A$ and $B$ of the appropriate size, $A \otimes B$ denotes their Kronecker product.

\section{Introductory Results}

Let ${\Lambda}$ be a $k$-algebra and recall that ${\Lambda}$ is quasi-Frobenius provided the left regular module ${_{\Lambda}{\Lambda}}$ is injective, ${\Lambda}$ is Frobenius provided ${_{\Lambda}{\Lambda}} \simeq {{\Lambda}_{\Lambda}}^* = \text{Hom}_k({\Lambda},k)$, ${\Lambda}$ is weakly symmetric provided ${\Lambda}$ is Frobenius and its Nakayama permutation is the identity, and ${\Lambda}$ is symmetric provided ${_{\Lambda}{\Lambda}_{\Lambda}} \simeq ({_{\Lambda}{\Lambda}_{\Lambda}})^*$. Evidently, each of these conditions is implied by the next and they are all left/right symmetric. If ${\Lambda}$ is quasi-Frobenius then $\text{Soc}({_{\Lambda}{\Lambda}}) = \text{Soc}({\Lambda}_{\Lambda})$ is a two-sided ideal; ${\Lambda}$ is Frobenius iff $\text{Soc}({_{\Lambda}{\Lambda}}) \simeq {\Lambda}/J({\Lambda})$ iff $\text{Soc}({\Lambda}_{\Lambda}) \simeq {\Lambda}/J({\Lambda})$; and if ${\Lambda}$ is local, then ${\Lambda}$ is quasi-Frobenius precisely when $\dim \text{Soc}({_{\Lambda}{\Lambda}}) = 1$, in which case ${\Lambda}$ is weakly symmetric. It is well-known that ${\Lambda}$ is Frobenius precisely when there is a linear map $\lambda : {\Lambda} \rightarrow k$ whose kernel contains no nonzero left (or equivalently right) ideals, and ${\Lambda}$ is moreover symmetric precisely when there exists such a $\lambda$ for which $\lambda(ab) = \lambda(ba)$ whenever $a,b \in {\Lambda}$. In this last case, we call $\lambda$ a symmetrizing form. If ${\Lambda}$ is symmetric with an idempotent $e$, then $e{\Lambda}e$ is symmetric as is $M_n({\Lambda})$ for any $n \geq 1$. So symmetry is preserved under Morita equivalences, as is the condition of being quasi-Frobenius. If $\Lambda_1$ and $\Lambda_2$ are algebras, then $\Lambda_1 \times \Lambda_2$ satisfies any of the four conditions precisely when $\Lambda_1$ and $\Lambda_2$ do so, $\Lambda_1 \otimes \Lambda_2$ satisfies any condition whenever both $\Lambda_1$ and $\Lambda_2$ do so, and $\Lambda_1$ and $\Lambda_2$ are (quasi-)Frobenius whenever $\Lambda_1 \otimes \Lambda_2$ is (quasi-)Frobenius by \cite{NakayamaII}. For convenience, we provide a proof to the following result.

\begin{lemma}\label{Monogenic}Any monogenic algebra ${\Lambda}$ is symmetric.\end{lemma}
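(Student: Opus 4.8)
The plan is to exploit the structure theory of monogenic algebras. A monogenic $k$-algebra is generated by a single element $x$, hence is commutative, and since it is finite dimensional $x$ satisfies a polynomial relation, so $\Lambda \cong k[X]/(f(X))$ for some nonzero $f$. I would first record the general principle that a commutative Frobenius algebra is automatically symmetric: any linear form $\lambda$ whose kernel contains no nonzero ideal satisfies $\lambda(ab) = \lambda(ba)$ trivially because $ab = ba$. Thus it suffices to prove that $\Lambda = k[X]/(f)$ is Frobenius (equivalently, as we shall see, quasi-Frobenius).

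Next I would reduce to the local case. Since $k$ is algebraically closed, write $f = \prod_{i=1}^{r}(X-a_i)^{m_i}$ with the $a_i$ distinct; the Chinese Remainder Theorem gives $\Lambda \cong \prod_{i=1}^{r} k[X]/\big((X-a_i)^{m_i}\big)$, and each factor is isomorphic to $k[X]/(X^{m_i})$ via the translation $X \mapsto X + a_i$. Because a finite direct product satisfies any of the four conditions precisely when each factor does (recalled in the excerpt), it remains only to treat $\Lambda_m := k[X]/(X^m)$.

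Finally, $\Lambda_m$ is local with radical $(X)$, and its socle is the one-dimensional space $kX^{m-1}$: indeed $X^{m-1}$ is annihilated by $(X)$, whereas any element of positive $X$-adic order $j < m-1$ fails to be annihilated by $X^{\,m-1-j}$. By the criterion recalled in the excerpt, a local algebra with one-dimensional socle is quasi-Frobenius and weakly symmetric; being local and commutative with simple socle it is in fact Frobenius (the socle, being simple, is isomorphic to $\Lambda_m/J(\Lambda_m)$), and hence symmetric by the opening remark. Equivalently, and this makes the symmetrizing form explicit, one checks directly that $\lambda\colon\Lambda_m\to k$ sending $c_0 + c_1 X + \cdots + c_{m-1}X^{m-1}$ to $c_{m-1}$ has kernel containing no nonzero ideal, since the ideals of $\Lambda_m$ are exactly the $(X^j)$ and each nonzero one contains $X^{m-1}$.

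I do not anticipate a serious obstacle; the only points requiring care are the reduction step (invoking that symmetry passes through the finite product decomposition supplied by the Chinese Remainder Theorem) and the identification of the socle of $k[X]/(X^m)$. One could sidestep the Chinese Remainder Theorem by setting $n = \deg f$, taking $\lambda$ to be the coefficient of $X^{n-1}$ in the canonical degree-$<n$ representative, and arguing directly that no nonzero ideal of $k[X]/(f)$ lies in $\ker\lambda$; but analyzing the ideal lattice of $k[X]/(f)$ comes down to the same computation, so the route above is cleaner.
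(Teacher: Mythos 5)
Your argument is correct and follows essentially the same route as the paper: reduce via the Chinese Remainder Theorem and translation to $k[T]/(T^{m})$, then exhibit the coefficient-of-$T^{m-1}$ functional as a Frobenius form, with symmetry automatic from commutativity. The extra detour through the socle criterion is harmless but unnecessary, since your explicit form already does the work.
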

\begin{proof}
We may write ${\Lambda} = k[X]/(p(X))$ for some nonzero polynomial $p(X)$. Since $k$ is algebraically closed, there are distinct $\alpha_1,\ldots,\alpha_n \in k$ with $p(X) = \prod (X-\alpha_i)^{e_i}$ for appropriate $e_i \geq 1$. By the Chinese Remainder Theorem ${\Lambda} \simeq \prod k[X]/((X-\alpha_i)^{e_i})$ and so without loss of generality we may assume $p(X) = (X-\alpha)^e$. However, the map $k[T]/(T^e) \rightarrow k[X]/((X-\alpha)^e)$ given by $T \mapsto X-\alpha$ is an isomorphism, and so we may suppose ${\Lambda} = k[T]/(T^e)$. Define $\lambda : {\Lambda} \rightarrow k$ by $\lambda(T^i) = \delta_{i,e-1}$ for $0 \leq i \leq e-1$, and suppose $\lambda({\Lambda}q(T)) = 0$ for some $q(T) \in {\Lambda}$. If $q(T) = \sum_{i=0}^{e-1} \beta_i T^i$, then $0 = \lambda(T^jq(T)) = \beta_{e-1-j}$ for $0 \leq j \leq e-1$. Hence, $q(T) = 0$ and $\lambda$ is a symmetrizing form.
\end{proof}

We also record, without proof, the next two lemmas for convenience.

\begin{lemma}\label{JCommute}Let $0 \not= \lambda \in k$ and write $J_m(\lambda)$ for the (upper triangular) irreducible Jordan block with eigenvalue $\lambda$. Then $\mathcal{C} := C_{M_m(k)}(J_m(\lambda))$ consists of all upper triangle matrices that are constant along diagonals. In particular, $\mathcal{C} \simeq k[T]/(T^m)$ as algebras, and so $\mathcal{C}$ is symmetric.\end{lemma}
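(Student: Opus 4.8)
The plan is to reduce everything to the single nilpotent shift matrix $N := J_m(0) = \sum_{i=1}^{m-1} E_{i,i+1}$. Since $J_m(\lambda) = \lambda I + N$ and $\lambda I$ is central in $M_m(k)$, a matrix commutes with $J_m(\lambda)$ if and only if it commutes with $N$; thus $\mathcal{C} = C_{M_m(k)}(N)$ and it suffices to identify the centralizer of $N$.

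Next I would compare entries in the equation $NA = AN$ for a general $A \in M_m(k)$. Writing out $(NA)_{ij}$ and $(AN)_{ij}$, one gets the recursion $A_{i+1,j} = A_{i,j-1}$ whenever $i < m$ and $j > 1$, together with the boundary relations $A_{i+1,1} = 0$ for $i < m$ and $A_{m,j-1} = 0$ for $j > 1$. The boundary relations say the first column and the last row of $A$ vanish away from the $(1,1)$ and $(m,m)$ positions, and the recursion $A_{i,j} = A_{i-1,j-1}$ then propagates: iterating it carries any sub-diagonal entry $A_{i,j}$ (with $i > j$) to some $A_{i-j+1,1}$ with $i-j+1 \geq 2$, which is $0$, and it shows each remaining entry is determined by the entry in the same diagonal lying in the first row. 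Hence $A$ is upper triangular and constant along diagonals. Conversely, any such matrix is a $k$-linear combination of $I, N, N^2, \dots, N^{m-1}$, and these visibly commute with $N$; so $\mathcal{C}$ is exactly the algebra of upper triangular Toeplitz matrices, with $k$-basis $\{N^i : 0 \leq i \leq m-1\}$.

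Finally, since $N^m = 0$ and $I, N, \dots, N^{m-1}$ are linearly independent, the $k$-algebra homomorphism $k[T]/(T^m) \rightarrow \mathcal{C}$ determined by $T \mapsto N$ is well-defined and surjective between algebras of the same dimension $m$, hence an isomorphism. In particular $\mathcal{C}$ is monogenic, so it is symmetric by Lemma \ref{Monogenic}; alternatively the explicit symmetrizing form $\lambda(N^i) = \delta_{i,m-1}$ works verbatim as in that proof.

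I would expect no serious obstacle here. The only step requiring genuine care is the bookkeeping in the entry-by-entry comparison $NA = AN$, namely tracking which boundary entries are forced to vanish and verifying that the diagonal recursion actually reaches all of the sub-diagonal positions.
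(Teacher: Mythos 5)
Your proof is correct and complete; the entry-by-entry comparison of $NA=AN$ is carried out accurately, and the reduction to $N=J_m(0)$ via centrality of $\lambda I$ is the standard route. The paper records this lemma explicitly without proof, so there is no argument to compare against; your write-up supplies exactly the expected one.
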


\begin{lemma}\label{fixed}Suppose $M$ is a $kG$-module with $k$-subspaces $M_1, \ldots, M_e$ such that $M = \bigoplus M_i$ and $G$ permutes the sets $\{ M_i \}$ transitively amongst themselves. Write $H$ for the set-wise stabilizer of $M_1$. Then $\text{Tr} : M^H \rightarrow M^G$ given by $\text{Tr}(x) = \sum_{g \in G/H} {^g x}$ is a linear isomorphism.\end{lemma}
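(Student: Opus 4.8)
\emph{Proof proposal.} The substance of the statement is that $\text{Tr}$ restricts to a linear isomorphism from $M_1^{H}$ onto $M^{G}$, where $M_1^{H}$ denotes the $H$-fixed points of the distinguished summand $M_1$ — an $H$-submodule of $M$, since $H$ is by definition the set-wise stabiliser of $M_1$. My plan is to prove this directly, by producing a two-sided inverse for $\text{Tr}$, namely the appropriate restriction of the projection $M \twoheadrightarrow M_1$.

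To set things up I would choose coset representatives $g_1 = 1, g_2, \ldots, g_e$ for $G/H$ with $g_i M_1 = M_i$; this is possible because $G$ acts transitively on $\{M_1, \ldots, M_e\}$ with stabiliser $H$ at $M_1$, so in particular $e = [G:H]$. For $x \in M_1^{H}$ I would then record the two routine facts that $\text{Tr}(x) = \sum_i g_i x$ is independent of the choice of the $g_i$ (since $x$ is $H$-fixed) and lies in $M^{G}$ (left multiplication by any $g \in G$ merely permutes the summands $g_i x$, because $g g_i = g_j h$ with $h \in H$ and $h x = x$). So $\text{Tr}$ does restrict to a linear map $M_1^{H} \to M^{G}$, and this is the map to be inverted.

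Now let $\pi_1, \ldots, \pi_e$ be the projections attached to $M = \bigoplus_i M_i$. The key observation — and the only place I need the full hypothesis that $H$ is the \emph{set-wise} stabiliser of $M_1$ — is that $\pi_1$ is a map of $kH$-modules: since $H$ permutes the $M_i$ and fixes $M_1$, for $h \in H$ and $m = \sum_i m_i$ the $M_1$-component of $hm$ is $h m_1$, i.e.\ $\pi_1(hm) = h\pi_1(m)$. Hence $\pi_1(M^{G}) \subseteq \pi_1(M^{H}) \subseteq M_1^{H}$, and I would check that $\pi_1$ is injective on $M^{G}$: if $y \in M^{G}$ with $\pi_1(y) = 0$, then for each $i$ we have $g_i^{-1} y = y$ while $g_i^{-1} M_i = M_1$, so the $M_1$-component $g_i^{-1}\pi_i(y)$ of $g_i^{-1} y$ vanishes; thus $\pi_i(y) = 0$ for all $i$ and $y = 0$. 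Finally, for $x \in M_1^{H}$ each summand $g_i x$ of $\text{Tr}(x)$ lies in $M_i$, so $\pi_1(\text{Tr}(x)) = g_1 x = x$. Hence $\pi_1|_{M^{G}} : M^{G} \to M_1^{H}$ is injective and has $\text{Tr}|_{M_1^{H}}$ as a right inverse, so the two are mutually inverse bijections, which is exactly the assertion.

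The step I expect to be least formal is the $kH$-equivariance of $\pi_1$, together with the mirror-image bookkeeping in the injectivity argument; both rest on the point that $H$ cannot move any $M_i$ with $i \neq 1$ onto $M_1$, so that no other summand leaks into the $M_1$-component. Once that is in hand, everything else is routine juggling of coset representatives. As a cross-check one can argue conceptually: the hypotheses say exactly that $M \cong M_1\uparrow^{G}$, so $M^{G} = \text{Hom}_{kG}(k, M) \cong \text{Hom}_{kH}(k, M_1) = M_1^{H}$ since induction from $kH$ to $kG$ coincides with coinduction; but that approach still leaves one to identify the resulting isomorphism with $\text{Tr}$, which is precisely what the projection argument does by hand.
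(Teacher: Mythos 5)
Your proof is correct, and there is nothing in the paper to compare it against: the author explicitly records Lemma~\ref{fixed} ``without proof,'' so any complete argument is a net gain. Two points are worth making. First, your decision to read the domain as $M_1^{H}$ rather than the literal $M^{H}$ is not just a convenience but a necessary repair: with the full fixed-point space $M^{H}$ the statement is false (take $G=\mathbb{Z}/2$ acting on $M=k\oplus k$ by swapping the factors, so $H=1$, $M^{H}=M$ is $2$-dimensional while $M^{G}$ is $1$-dimensional), and the way the lemma is invoked in the proof of Theorem~\ref{pNilpotent} --- as an isomorphism $(bkP)^{Q}\rightarrow B^{P}$ with $bkP$ the distinguished summand --- confirms that $M_1^{H}$ is what is meant. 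Second, your mechanism is the standard one (exhibiting $\mathrm{Tr}$ and the projection $\pi_1$ as mutually inverse), and every step checks out: the $kH$-equivariance of $\pi_1$ uses exactly the set-wise stabiliser hypothesis, the injectivity of $\pi_1$ on $M^{G}$ via $g_i^{-1}y=y$ is sound, and the identity $\pi_1\circ\mathrm{Tr}=\mathrm{id}_{M_1^{H}}$ together with the injectivity of $\pi_1|_{M^{G}}$ does force both maps to be bijections. The argument uses no division by $[G:H]$, so it is valid in all characteristics, which matters since the paper applies it in characteristic $p$ with $[G:H]$ a power of $p$.
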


We now provide the result that has motivated much of the work in this paper.

\begin{thm}\label{pNilpotent}Suppose $k$ is a field of characteristic $p$ and $G$ is a $p$-nilpotent group with $N = O_{p'}(G)$ and $P \in \text{Syl}_p(G)$. Then $kG^P$ is Frobenius if and only if $P$ is abelian and $E_{T_P(M)}(M)$ is Frobenius for every $M \in \text{Irr}(kN)$. \end{thm}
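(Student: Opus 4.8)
The plan is to split $kG^P$ along the blocks of $kG$ and match each factor with one of the algebras $E_{T_P(M)}(M)$. Since $G$ is $p$-nilpotent, Schur--Zassenhaus gives $G = N \rtimes P$, so that $kG = kN \ast P$ is a skew group algebra over the semisimple algebra $kN \cong \prod_M \text{End}_k(M)$, the product over a set of representatives $M \in \text{Irr}(kN)$. Conjugation by $P$ permutes the primitive idempotents $e_M$ of $kN$ according to the action of $P$ on $\text{Irr}(kN)$; for a $P$-orbit $\mathcal{O}$ set $f_{\mathcal{O}} = \sum_{M \in \mathcal{O}} e_M$, a $P$-fixed central idempotent of $kG$ with $f_{\mathcal{O}}\,kG = (f_{\mathcal{O}}\,kN) \ast P$. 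The $f_{\mathcal{O}}$ are orthogonal and sum to $1$, and each is $P$-fixed, so $kG^P = \prod_{\mathcal{O}} (f_{\mathcal{O}}\,kG)^P$; by the behaviour of the Frobenius property under finite products it suffices to decide, for each orbit $\mathcal{O}$, when $(f_{\mathcal{O}}\,kG)^P$ is Frobenius.

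Fix such an orbit, a representative $M$, and set $Q = T_P(M)$, so that $f_{\mathcal{O}}\,kN \cong \prod_{M' \in \mathcal{O}}\text{End}_k(M')$ with $\mathcal{O} \cong P/Q$ as a $P$-set; thus $P$ permutes the factors, the factor belonging to $M$ having setwise stabilizer $Q$. The first step is to compute the conjugation-fixed subring $C := (f_{\mathcal{O}}\,kN)^P$. An element of $C$ is determined by its component in the factor $\text{End}_k(M)$, which must be fixed by $Q$; and $Q$ acts on $\text{End}_k(M)$ by conjugation by invertible operators $\varphi_g$ ($g \in Q$) implementing the isomorphisms ${}^g M \cong M$. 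Projecting onto this factor --- the bookkeeping is supplied by Lemma \ref{fixed} --- yields an algebra isomorphism $C \cong C_{\text{End}_k(M)}(\{\varphi_g : g \in Q\})$, which is the endomorphism ring of $M$ regarded as a module over the twisted group algebra attached to the projective representation $g \mapsto \varphi_g$ of $Q$. Since $Q$ is a $p$-group and $k$ has characteristic $p$, the group $k^\times$ is uniquely $p$-divisible, hence $H^2(Q, k^\times) = 0$; so this projective representation can be straightened to an ordinary one --- equivalently, $M$ extends to a $k(N \rtimes Q)$-module $\widehat{M}$ --- and therefore $C \cong \text{End}_{kQ}(\widehat{M}{\downarrow_Q}) = E_{T_P(M)}(M)$. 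Now assume $P$ is abelian. Then conjugation by $P$ fixes $P$ pointwise, so within $f_{\mathcal{O}}\,kG = (f_{\mathcal{O}}\,kN) \ast P$ the fixed subring is $\bigoplus_{g \in P} C g$; and because the elements of $C$ are themselves $P$-fixed, the crossed-product twist disappears, giving $(f_{\mathcal{O}}\,kG)^P \cong C \otimes_k kP \cong E_{T_P(M)}(M) \otimes_k kP$. As $kP$ is symmetric, the tensor-product behaviour of the Frobenius property shows that $(f_{\mathcal{O}}\,kG)^P$ is Frobenius precisely when $E_{T_P(M)}(M)$ is; combined with the block reduction this proves the ``if'' direction, and also the ``only if'' direction once we know that $P$ is abelian.

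It remains to show that $kG^P$ Frobenius forces $P$ to be abelian. For $\mathcal{O}$ the orbit of the trivial module, $f_{\mathcal{O}}\,kN \cong k$ carries the trivial $P$-action, so $f_{\mathcal{O}}\,kG \cong kP$ $P$-equivariantly and hence $(f_{\mathcal{O}}\,kG)^P \cong Z(kP)$. Thus $Z(kP)$ is Frobenius; being commutative it is then symmetric, and by \cite{Muller} applied to the (automatically $p$-nilpotent) group $P$ this forces $P$ to be abelian. I expect the main obstacle to be the identification $C \cong E_{T_P(M)}(M)$: one must check that the non-trivial part of the orbit contributes nothing beyond the matrix-size inflation absorbed by Lemma \ref{fixed}, and --- the genuinely essential point --- that the a priori \emph{projective} action of $Q$ on $M$ may be replaced by an ordinary one, which is exactly where the hypotheses that $Q$ is a $p$-group and that $k$ has characteristic $p$ come in, via the vanishing of $H^2(Q, k^\times)$. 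The remaining ingredients --- the block decomposition, the untwisting of the crossed product in the abelian case, and the reduction of $Z(kP)$ through \cite{Muller} --- are routine.
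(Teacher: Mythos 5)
Your proposal is correct and follows essentially the same route as the paper's proof: decompose $kG^P$ over the blocks of $kG$ (equivalently, the $P$-orbits of blocks of the $p'$-group $N$), use the transfer of Lemma \ref{fixed} to identify the $P$-fixed points of each block with the $T_P(M)$-fixed points of $\text{End}_k(M)$, and exhibit the splitting $E_{T_P(M)}(M)\otimes kP$ once $P$ is abelian. The only variations are that you obtain the necessity of $P$ abelian from \cite{Muller} applied to $Z(kP)$ rather than from the result of \cite{AAA} on $k\bar{G}^{\bar{P}}$ that the paper invokes, and that you spell out the vanishing of $H^2(T_P(M),k^\times)$ which the paper subsumes under its appeal to Clifford theory; both substitutions are valid.
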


Note: if $M \in \text{Irr}(kN)$ then there is a unique irreducible $T_G(M)$-module, also denoted by $M$, whose restriction to $N$ equals $M$, and so $E_{T_P(M)}(M)$ is sensibly defined.

\begin{proof}
Suppose for the moment that $kG^P$ is Frobenius. Write $\bar{G} = G/N$ and let $e = \frac{1}{|N|}\sum_{n \in N} n$ so that $b_0 = ekG$ is the principal block of $kG$. Then $b_0 \simeq k\bar{G}$ and $b_0^P \simeq k\bar{G}^{\bar{P}}$ where $\bar{P}$ acts on $k\bar{G}$ by conjugation. Since $\bar{G}$ is a $p$-group for which $k\bar{G}^{\bar{P}}$ is Frobenius, we know from \cite{AAA} that $\bar{P} \leq Z(\bar{G})$, and hence $P$ is abelian.

Under the condition that $P$ is abelian, we now derive necessary and sufficient conditions for $kG^P$ to be Frobenius. Suppose $b$ is a block of $kN$ with corresponding block idempotent $e_b$. Let $\{e_b = e_{b_1},\ldots,e_{b_r} \}$ be the conjugates of $e_b$ under the action of $P$. Then there is a unique block $B$ of $kG$ that covers $b$, and in fact $e_B = e_{b_1} + \cdots + e_{b_r}$. So $B = e_B kG = \bigoplus e_{b_i} kG = \bigoplus_i b_i kP$. Note that $P$ permutes the subspaces $\{ b_i kP \}$ transitively by conjugation. Also, $Q := C_P(b)$ consists of all elements in $P$ that stabilize $b kP$ set-wise. Therefore, $\text{Tr} : (b kP)^Q \rightarrow B^P$ is a linear isomorphism by Lemma \ref{fixed}.

By consideration of supports, $b kP = \bigoplus_{p \in P} b p$. Moreover, $b p$ is a $Q$-invariant subspace since $P$ is abelian. In particular, $(b kP)^Q = \bigoplus_{p \in P} b^Q p$ and $\text{Tr}(xp) = \text{Tr}(x)p$ for $x \in b^Q$. There is a well-defined linear map $b^Q \otimes kP \rightarrow B^P$ given by $x \otimes y \mapsto \text{Tr}(x)y$. Suppose $\xi$ lies in the kernel of this map and write $\xi = \sum x_p \otimes p$ for some $x_p \in b^Q$. Then $0 = \sum \text{Tr}(x_p)p = \text{Tr}(\sum x_p p)$ and $\sum x_p p \in (bkP)^Q$. So $\sum x_p p = 0$ and hence $x_p = 0$ for all $p$, so that $\xi = 0$. It follows from the previous remarks that the map $b^Q \otimes kP \rightarrow B^P$ is surjective, and we claim that it is an algebra homomorphism. Since $\text{Tr}(x)p = p\text{Tr}(x)$ for $x \in b^Q$ it suffices to show that $\text{Tr}(xy) = \text{Tr}(x)\text{Tr}(y)$ for $x,y \in b^Q$. If $t,t' \in P$ with $tQ \not= t'Q$ then $txt^{-1}$ and $t'yt'^{-1}$ lie in different sets from $\{b_1,\ldots,b_r\}$ and so $txt^{-1}t'yt'^{-1} = 0$. Consequently, $\text{Tr}(x)\text{Tr}(y) = \sum_{t \in P/Q} txyt^{-1} = \text{Tr}(xy)$ as required.

Thus, $B^P \simeq b^Q \otimes kP$ as algebras. Note that $B^P$ is Frobenius precisely when $b^Q$ is Frobenius since $kP$ is always Frobenius. However, $b$ is a block of $kN$ of defect zero since $N$ is a $p'$-group, and so $b = E_k(M_b)$ for some $M_b \in \text{Irr}(N)$. By Clifford Theory there is a unique irreducible $T_G(M_b)$-module whose restriction to $N$ equals $M_b$. We also write this $T_G(M_b)$-module as $M_b$, and note that $b^Q \simeq E_Q(M_b)$. This establishes the result.
\end{proof}

Note that the splitting $B^P \simeq b^Q \otimes kP$ is similar to the main result from \cite{KulshammerSplitting}, though that result is obtained under slightly different assumptions. The result should also be true with the symmetric condition replacing the Frobenius condition, but we were unable to find a reference that symmetry is preserved under taking tensor summands. We now turn our attention to the analysis of when $E_P(M)$ is a Frobenius or symmetric algebra, as per the condition in Proposition \ref{pNilpotent}. We begin with the simplest case of a cyclic $p$-group.

\begin{prop}\label{CyclicModules}Suppose $P$ is a cyclic $p$-group and $M \in {_{kP}\text{mod}}$. Then $E_P(M)$ is quasi-Frobenius iff $M$ is isotypic, in which case $E_P(M)$ is symmetric.\end{prop}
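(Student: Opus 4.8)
The plan is to exploit the complete classification of finitely generated $kP$-modules for $P$ cyclic: since $P = \langle g \rangle$ with $|P| = p^n$, the algebra $kP \simeq k[T]/(T^{p^n})$ is a Nakayama algebra with a unique simple module, and its indecomposables are exactly the $p^n$ modules $M_i := kP/(T^i)$ for $1 \le i \le p^n$, each a ``string'' of a single Jordan block. So an arbitrary $M$ decomposes as $M \simeq \bigoplus_i M_i^{a_i}$, and I would first record that $E_P(M)$ depends (up to Morita equivalence, hence up to the relevant properties by the remarks in section 2) only on the set of $i$ with $a_i \ne 0$: grouping isomorphic summands, $E_P(M)$ is Morita equivalent to $\mathrm{End}_{kP}\big(\bigoplus_{i : a_i \ne 0} M_i\big)$.

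For the ``if'' direction, suppose $M$ is isotypic, $M \simeq M_i^{a}$. Then $E_P(M) \simeq M_a(\mathrm{End}_{kP}(M_i))$, and $\mathrm{End}_{kP}(M_i) \simeq kP/\mathrm{ann}(M_i) \simeq k[T]/(T^i)$ is monogenic, hence symmetric by Lemma \ref{Monogenic}. Since $M_n(\Lambda)$ is symmetric whenever $\Lambda$ is (noted in section 2), $E_P(M)$ is symmetric, and in particular quasi-Frobenius. Concretely one computes $\mathrm{End}_{kP}(M_i)$ via Lemma \ref{JCommute}: a $kP$-endomorphism of $M_i$ is a matrix commuting with the action of $g$, i.e.\ with $J_i(1)$, so it is a polynomial in $T$ acting on $k[T]/(T^i)$.

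For the ``only if'' direction, suppose $M$ is not isotypic; I must show $E_P(M)$ is not quasi-Frobenius. Pick two non-isomorphic summands $M_i$ and $M_j$ with, say, $i < j$. The idea is to locate a non-simple projective-indecomposable (equivalently, a violation of the dimension-of-socle criterion). After reducing to $\Lambda := \mathrm{End}_{kP}(M_i \oplus M_j)$, I would compute the Hom spaces: $\mathrm{Hom}_{kP}(M_i, M_j)$, $\mathrm{Hom}_{kP}(M_j, M_i)$, $\mathrm{End}_{kP}(M_i)$, $\mathrm{End}_{kP}(M_j)$ all have known dimensions ($\min(i,j) = i$ for the two cross terms, $i$ and $j$ for the ends), using that maps between these uniserial modules are determined by where the top generator goes. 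Then I would identify the radical and socle of $\Lambda$ directly and exhibit that $\dim \mathrm{Soc}(e\Lambda e') $ patterns fail the Frobenius socle condition $\mathrm{Soc}(_\Lambda\Lambda) \simeq \Lambda/J(\Lambda)$ — in fact $\Lambda$ will be a (non-self-injective) Nakayama algebra whose Kupisch series is not constant, so some projective indecomposable has simple top but non-simple, non-injective structure. Alternatively, and perhaps more cleanly, invoke that a quasi-Frobenius Nakayama algebra has all projective indecomposables of the same length; since the two idempotents here give projectives of lengths tied to $i$ and $j$ with $i \ne j$, quasi-Frobeniusness fails.

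The main obstacle is the ``only if'' direction: making the failure of the socle/injectivity condition precise and clean. The safest route is to show $\Lambda = \mathrm{End}_{kP}(M_i \oplus M_j)$ is Nakayama with non-constant Kupisch series and then cite the standard fact that a self-injective Nakayama algebra has constant Kupisch series; but if one wants a self-contained argument, the work is in explicitly writing down a basis for $\Lambda$ as $2\times 2$ blocks of Hom spaces, computing the composition (which involves the canonical inclusion/projection maps among the $M_\ell$), and checking that the right socle and left socle, or the socle and top, do not match up. I would budget most of the effort there; everything else is bookkeeping with Lemmas \ref{Monogenic} and \ref{JCommute}.
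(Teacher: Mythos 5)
Your ``if'' direction is correct and is essentially the paper's argument: an indecomposable $kP$-module is a single Jordan block for the generator, its endomorphism algebra is the commutant $k[T]/(T^i)$ (Lemma \ref{JCommute} / Lemma \ref{Monogenic}), and matrix algebras over symmetric algebras are symmetric. The gap is entirely in the ``only if'' direction, which you acknowledge is where the work lies but do not actually carry out. The paper disposes of it in one line by citing Theorem \ref{localAlgebra}, which proves for an arbitrary local algebra $\Lambda$ that $E_{\Lambda}(M)$ quasi-Frobenius forces $M$ isotypic; so the entire content of your ``only if'' direction is that theorem specialized to $kP$, and your proposal defers rather than supplies it.

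More seriously, the route you call ``safest'' rests on a false premise: $\mathbb{E}=\mathrm{End}_{kP}(M_i\oplus M_j)$ need not be a Nakayama algebra, so there is no Kupisch series to appeal to. Take $kP\simeq k[T]/(T^q)$ with $q\ge 3$, $M_1=kP/(T)$ and $M_3=kP/(T^3)$, with idempotents $e_1,e_2$ of $\mathbb{E}$ the projections onto $M_1$ and $M_3$. Then $\mathbb{E}e_2=\mathrm{Hom}(M_3,M_1\oplus M_3)$ is $4$-dimensional with Loewy layers of dimensions $1,2,1$: modulo $J^2(\mathbb{E})e_2$ its radical is spanned by the projection $\pi:M_3\twoheadrightarrow M_1$ and by $T$ acting on $M_3$, which lie in $e_1\mathbb{E}e_2$ and $e_2\mathbb{E}e_2$ respectively, so $J(\mathbb{E})e_2/J^2(\mathbb{E})e_2\simeq S_1\oplus S_2$ is not simple and $\mathbb{E}e_2$ is not uniserial. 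What does work is the direct socle computation you mention only as an alternative: writing $\iota:M_1\hookrightarrow \mathrm{Soc}(M_3)$, one checks $\mathrm{Soc}(\mathbb{E}e_1)=k\iota$ and $\mathrm{Soc}(\mathbb{E}e_2)=kT^2$, both isomorphic to $S_2=\mathrm{Top}(\mathbb{E}e_2)$, so the two indecomposable projectives have isomorphic simple socles and $\mathbb{E}$ cannot be quasi-Frobenius. This is precisely the mechanism of the paper's proof of Theorem \ref{localAlgebra} (unless $M_1$ and $M_2$ embed in one another, $\mathrm{Soc}(P_1)\simeq\mathrm{Soc}(P_2)$), and it, not the Nakayama structure, is what you need to make precise for general $i\ne j$.
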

\begin{proof}
By Theorem \ref{localAlgebra} to be proved later, it suffices to show that $E_P(M)$ is symmetric whenever $M$ is indecomposable. For this, let $x$ be a generator of $P$ of order $q$ and choose a basis $\{ v_1,\ldots,v_r \}$ of $M$ with $r \leq |P|$ such that $x$ acts via the matrix $J_m(1)$ with respect to this basis. Then $E_P(M) = \mathcal{C}$ is symmetric by Lemma \ref{JCommute}.
\end{proof}

\section{Dihedral 2-groups}

We now turn to one of the most interesting situations where all indecomposable $kP$-modules are known: dihedral 2-groups in characteristic 2. Begin by letting $D_{\infty} = \lr{x,y | x^2 = y^2 = 1}$ be the infinite dihedral group and $k$ an algebraically closed field of characteristic 2. Write $\Lambda = kD_{\infty}$ and define generators of $\Lambda$ by $X = x-1$ and $Y = y-1$. If $D_{4q}$ is the dihedral group of order $4q$ for $q \geq 1$ a power of 2, then $kD_{4q} \simeq \Lambda/I_q$ where $I_q = \lr{(XY)^q-(YX)^q}$. So an indecomposable $kD_{4q}$-module is the same as an indecomposable $\Lambda$-module $M$ with $I_q \subseteq \text{Ann}_{\Lambda}(M)$, and moreover $E_{D_{4q}}(M) \simeq E_{\Lambda}(M)$. So we may concentrate on the totality of all $\Lambda$-modules for the moment. These fall into one of two types: string modules and band modules.

We first construct the string modules, modifying the treatment from \cite{BensonI} only slightly. More precisely, we let $\mathcal{W}$ denote the words (= strings) of finite length on the letters $a, b, a^{-1}$, and $b^{-1}$ with the caveat that any appearance of $a$ is followed by $b$ or $b^{-1}$, and any appearance of $b$ is followed by $a$ or $a^{-1}$. We also include words $1_a$ and $1_b$ of length zero in $\mathcal{W}$. For every $w \in \mathcal{W}$ we write $w = l_1 \cdots l_n$ for $l_i \in \{a,b,a^{-1},b^{-1}\}$, and we define $n = |w|$ as the length of $w$. Let $\mathcal{B} = \{z_1,\ldots,z_{n+1}\}$ be the basis of an $(n+1)$-dimensional $k$-space $M_w$. We endow $M_w$ with a $\Lambda$-module structure of as follows:

\begin{equation}\label{string}
X.z_i = \begin{cases}
z_{i+1} &\text{if $l_i = a$} \\
z_{i-1} &\text{if $l_{i-1} = a^{-1}$} \\
0 &\text{otherwise}
\end{cases}
\end{equation}

and similarly for $Y$ with $a$ replaced by $b$. For $w = 1_a, 1_b$ we simply have $M_w = k$ with $X$ and $Y$ acting as zero. It is convenient to visualize $M_w$ in terms of diagrams. For instance, we assign to the word $a^{-1}bab^{-1}a^{-1}$ the diagram

\begin{figure}[here]
$$\xymatrix{z_1 & z_2 \ar[l]_a \ar[r]^b & z_3 \ar[r]^a & z_4 & z_5 \ar[l]_b & z_6 \ar[l]_a }$$
\label{fig5.1}
\end{figure}

Then relative to the basis $\mathcal{B}$ we have $X = E_{12}+E_{43}+E_{56}$ and $Y = E_{32}+E_{45}$ where $E_{ij} : M_w \rightarrow M_w$ by $E_{ij}(z_k) = \delta_{jk}z_i$. In the previous diagram, we say that $z_1$ and $z_4$ are 'sinks', while $z_2$ and $z_6$ are 'sources'. If $w \in \mathcal{W}$ then we may form $w^{-1} \in \mathcal{W}$ with the understanding that $1_a^{-1} = 1_b$ and $1_b^{-1} = 1_a$. It is straightforward to show that $M_{w^{-1}} \simeq M_w$. With this notation we may establish the following.

\begin{thm}\label{StringModules}Let $D_{\infty}$ be the infinite dihedral 2-group, $k$ an algebraically closed field of characteristic 2, and $\Lambda = kD_{\infty}$. If $M_w$ is a string module for $\Lambda$, then $E_{\Lambda}(M_w)$ is a quasi-Frobenius algebra precisely when $w$ or $w^{-1}$ equals one of $1_a, a, b, (ba)^l$, or $(ab)^l$ for some $l \geq 1$. Moreover, $E_{\Lambda}(M_w)$ is symmetric whenever it is quasi-Frobenius.\end{thm}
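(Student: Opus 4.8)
The plan is to compute $E_\Lambda(M_w)$ explicitly for each word $w$ and read off whether it is quasi-Frobenius, then verify symmetry in those cases. First I would set up the combinatorial machinery: for string modules, $\operatorname{Hom}_\Lambda(M_w, M_{w'})$ has a well-known basis indexed by ``admissible'' pairs consisting of a quotient substring of $w$ and a submodule substring of $w'$ that are equal as words (the ``graph maps'' of Crawley-Boevey / Butler-Ringel), and composition of such maps is again a graph map or zero. Specializing to $w' = w$, this gives a concrete $k$-basis of $E_\Lambda(M_w)$ together with a multiplication table. I would organize the words $w$ (up to the harmless replacement $w \leftrightarrow w^{-1}$, which by the remark before the theorem gives isomorphic endomorphism algebras) according to how many times one can ``slide'' $w$ along itself — i.e.\ according to the substrings of $w$ that occur as both a quotient string and a sub string.

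The key dichotomy is periodicity. If $w$ or $w^{-1}$ is $1_a$, $a$, or $b$, then $M_w$ is one- or two-dimensional and $E_\Lambda(M_w)$ is easily seen to be $k$ or $k[T]/(T^2)$, hence symmetric by Lemma \ref{Monogenic}. If $w$ or $w^{-1}$ equals $(ba)^l$ or $(ab)^l$, then $M_w$ has the special feature that it is ``periodic'' as a string: the identity substring slides, and I expect $E_\Lambda(M_w) \simeq k[T]/(T^{l+1})$ (the single endomorphism $T$ being the length-one ``shift''), which is monogenic and therefore symmetric by Lemma \ref{Monogenic} — mirroring the cyclic case of Proposition \ref{CyclicModules}. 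For every other word $w$, I would show $E_\Lambda(M_w)$ is \emph{not} quasi-Frobenius by exhibiting that its socle (as a left module over itself) has dimension $\geq 2$, or equivalently that $J(E_\Lambda(M_w))$ has two-dimensional right annihilator: non-periodic strings that are not among the listed exceptions either have ``asymmetric ends'' producing two independent nilpotent graph maps whose product vanishes, or decompose in a way that forces $\operatorname{Soc}$ to be too big. Concretely, since $E_\Lambda(M_w)$ is local (the algebra is basic with $M_w$ indecomposable), I can invoke the criterion quoted in section 2 that a local algebra is quasi-Frobenius iff $\dim\operatorname{Soc} = 1$, so the whole non-quasi-Frobenius direction reduces to producing two linearly independent elements killed by $J(E_\Lambda(M_w))$.

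The main obstacle will be the bookkeeping for the negative direction: one has to argue uniformly over the infinitely many non-exceptional words that two independent socle elements always exist, and the graph-map combinatorics for self-overlaps of a word can be intricate (overlaps at both ends, overlaps of $w$ with $w^{-1}$, etc.). I would handle this by a normal-form argument — writing an arbitrary $w$ in terms of its maximal periodic ``core'' and its two end-fragments, and checking that unless both end-fragments are trivial and the core exhausts $w$ (the $(ba)^l$/$(ab)^l$ case) or $w$ is tiny (the $1_a,a,b$ cases), there are at least two ``short'' graph endomorphisms annihilated by every ``shorter'' one. Once the quasi-Frobenius classification is in hand, the symmetry claim is cheap: in every quasi-Frobenius case the algebra turns out to be monogenic, so Lemma \ref{Monogenic} finishes it, with no separate computation of a symmetrizing form required.
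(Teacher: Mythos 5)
Your plan is correct in outline and lands on the same skeleton as the paper's proof: reduce via $w\leftrightarrow w^{-1}$ and the $a\leftrightarrow b$ symmetry, compute $E_\Lambda(M_w)$ explicitly, use the local criterion that quasi-Frobenius is equivalent to $\dim\operatorname{Soc}({_{\E}\E})=1$, and observe that every quasi-Frobenius case is monogenic so Lemma \ref{Monogenic} gives symmetry for free (indeed $E_\Lambda(M_{(ab)^l})\simeq k[T]/(T^{l+1})$, exactly as you predict). The genuine difference is the machinery: you invoke the Butler--Ringel/Crawley-Boevey graph-map basis of $\operatorname{Hom}$ between string modules, whereas the paper works directly with the centralizer of the matrices $X$ and $Y$ in $M_{n+1}(k)$ and derives the shape of an endomorphism (lower triangular, constant on diagonals) from first principles. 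The graph-map route buys generality and a ready-made basis; the paper's route is self-contained and, more importantly, supplies a clean organizing principle for the negative direction that your plan leaves vague: for each \emph{source} $z_j$ of the string diagram, the span of $\{E_{lj}: z_l \text{ a sink}\}$ is a nonzero submodule of ${_{\E}\E}$, and distinct sources give independent submodules, so two or more sources immediately force $\dim\operatorname{Soc}\geq 2$. This collapses your ``normal-form with periodic core and end-fragments'' bookkeeping to a single count, leaving only the unique-source words $a$, $(ab)^l$, $(ab)^la$ to examine by hand.

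One caution on your proposed dichotomy: having a unique source (equivalently, a unique sink) does \emph{not} suffice for quasi-Frobenius. The word $w=(ab)^la$ with $l\geq 1$ has exactly one source and one sink, yet its endomorphism algebra has basis $\{T_0,\dots,T_l,E_{2l+1,1}\}$ with both $T_l$ and $E_{2l+1,1}$ killed by the radical, so the socle is $2$-dimensional. Your sketch does gesture at this (the ``nontrivial end-fragment'' producing a second short endomorphism), but when you carry the plan out you must actually exhibit the second socle element in this family rather than rely on periodicity alone; this is the one computation in the theorem that is not disposed of by counting sources.
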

\begin{proof}
If $w = 1_a$ or $w = 1_b$ then $M_w = k$ and $E_{\Lambda}(k) \simeq k$ is symmetric. So assume $|w| \geq 1$. We regard $\E = E_{\Lambda}(M_w)$ as the collection of matrices that commute with $X$ and $Y$, where $X$ and $Y$ are regarded as matrices via their action on $M_w$ with respect to the basis $\mathcal{B}$. If we switch $a$ with $b$ and $a^{-1}$ with $b^{-1}$ in $w$, then the effect is to switch $X$ with $Y$, and $\E$ remains unchanged. Moreover, if we switch $a$ with $a^{-1}$ and $b$ with $b^{-1}$, then the effect is to switch $X$ and $Y$ with their transposes $X^t$ and $Y^t$, respectively. The centralizer in $M_n(k)$ of $X^t$ and $Y^t$ equals $\E^t$, and $\E^t \simeq \E^{\text{op}}$ is quasi-Frobenius (or symmetric) precisely when $\E$ is the same. So we may assume that $w$ begins with $a$.

Note that $XE_{ij} = E_{kj}$ if $X$ sends $z_i$ to $z_k$, and $XE_{ij} = 0$ if $X(z_i) = 0$. So if $z_i$ is a sink then $XE_{ij} = 0$, and if $z_j$ is a source then $E_{ij}X = 0$; similarly for $Y$. Hence, $E_{ij} \in \mathbb{E}$ whenever $z_i$ is a sink and $z_j$ is a source. Suppose $T \in \mathbb{E}$ and write $T = \sum t_{lk}E_{lk}$ for some scalars $t_{lk}$, so that $TE_{ij} = \sum_l t_{li}E_{lj}$. Then

$$0 = TXE_{ij} = XTE_{ij} = \sum_l t_{li} XE_{lj} = \sum_{\forall l \exists k (X(z_l) = z_k)} t_{li} E_{kj}$$

Since $X$ never maps two basis elements to the same basis element, we have by linear independence that $t_{li} = 0$ whenever $X(z_l) \not= 0$. Similarly, $t_{li} = 0$ whenever $Y(z_l) \not= 0$. Hence, $t_{li} = 0$ unless $z_l$ is a sink. Thus, if $z_i$ is a sink and $z_j$ a source then

$$TE_{ij} = \sum_{l : \text{$z_l$ is a sink}} t_{li} E_{lj}$$

Therefore, we obtain a submodule $N_j$ of ${_{\E} \E}$ by fixing a source $z_j$, and letting $N_j$ be spanned by all $E_{lj}$ for $z_l$ an arbitrary sink. Clearly $N_j \cap N_{j'} = 0$ for $j \not= j'$, and ${_{\E} \E}$ is an indecomposable projective since $\mathbb{E}$ is a local algebra. In particular, if the diagram has at least two sources $z_j$ and $z_{j'}$, then $\mathbb{E}$ is not quasi-Frobenius since $\text{Soc}({_{\E} \E}) \supseteq \text{Soc}(N_j) \oplus \text{Soc}(N_{j'})$ is at least 2-dimensional.

We assume then that there is a unique source, which must be $z_1$ since $w$ starts with $a$. Hence, $w$ is $a$, $(ab)^l$, or $(ab)^la$ for some $l \geq 1$. We compute a basis for $\mathbb{E}$ explicitly in these cases. Let $T \in \mathbb{E}$ and write $T(z_j) = \sum_i t_{ij} z_i$ for $z_i \in \mathcal{B}$. Then

$$\sum_{i : l_i = a} t_{ij} z_{i+1} = \sum_i t_{ij}Xz_i = XT(z_j) = TX(z_j) = \delta_{\texttt{true},l_j = a} \sum_i t_{i+1,j+1} z_{i+1}$$

and similarly for $YT = TY$. In particular $T \in \mathbb{E}$ precisely when it obeys

\begin{equation*}
\begin{split}
\text{$l_i = a$ and $l_j \not= a$} &\rightarrow t_{ij} = 0 \\
\text{$l_i = b$ and $l_j \not= b$} &\rightarrow t_{ij} = 0 \\
(l_i,l_j) \in \{(a,a),(b,b)\} &\rightarrow t_{ij} = t_{i+1,j+1} \\
\text{$l_n = b$ and $l_j = a$} &\rightarrow t_{n+1,j+1} = 0 \\
\text{$l_n = a$ and $l_j = b$} &\rightarrow t_{n+1,j+1} = 0 \\
\end{split}
\end{equation*}

Therefore, $(t_{ij})$ must be a lower triangular matrix that is constant along diagonals and satisfies $t_{i1} = 0$ unless $i$ is odd or $i = n+1$. For $i \geq 0$ define

$$T_i = \sum_{j=1}^{n-2i+1} E_{j+2i,j}$$

where an empty summation is understood to equal 0. Observe that $T_0 = I$ and $T_iT_{i'} = T_{i+i'}$. If $w = (ab)^l$ then $\{T_0,T_1,\ldots,T_l\}$ is a basis of $\mathbb{E}$. Since $T_1^j = T_j$ for $1 \leq j \leq l$ we see that $\mathbb{E}$ is symmetric by Lemma \ref{Monogenic}. On the other hand, if $w = (ab)^la$ then $\mathcal{B}' = \{ T_0,T_1,\ldots,T_l,E_{2l+1,1} \}$ is a basis of $\mathbb{E}$. When $l = 0$ (i.e. $w = a$) we have $\mathbb{E}$ is symmetric by Lemma \ref{Monogenic}, and so we assume $l \geq 1$. Then the radical of $\mathbb{E}$ is spanned by $\mathcal{B}' \setminus \{T_0\}$, which annihilates $T_l$ and $E_{2l+1,1}$ by left multiplication, and thus shows that $\dim \text{Soc}({_{\E} \E}) \geq 2$. So $\mathbb{E}$ is not quasi-Frobenius, thus completing the proof.
\end{proof}

Now we deal with the band modules, where the details will not be too different from the details for string modules. To begin, the $n$th power of a word of even length is simply the juxtaposition of the word $n$ times. We call a string $w \in \mathcal{W}$ a \textit{band} if $w$ has positive even length and $w$ is not a power of a proper even length subword. Given a band $w$, an integer $m \geq 1$, and a scalar $0 \not= \lambda \in k$, we let $V$ be the $k$-space of dimension $m$ and write $M(w,m,\lambda) = \bigoplus_{i=1}^n Vz_i$. Moreover, the $\Lambda$-module structure on $M(w,m,\lambda)$ is similar to the $\Lambda$-module structure on $M(w)$, and it is easier simply to provide an example in place of a detailed description (though the reader is referred to \cite{BensonI} for the latter). First, we let $J_m(\lambda)$ be the irreducible Jordan block with minimal polynomial $p(X) = (X-\lambda)^m$. Then given $w = aba^{-1}b^{-1}$ we have the diagram

\begin{figure}[here]
$$\xymatrix{z_1 \ar[r]^a \ar@/_2pc/[rrr]^b & z_2 \ar[r]^b & z_3 & z_4 \ar[l]_a}$$
\label{fig5.2}
\end{figure}

Fixing a basis $\mathcal{B}$ of $V$ we take the basis $\mathcal{A} = \coprod_{i=1}^n \mathcal{B}z_i$ of $M(w,m,\lambda)$. With respect to $\mathcal{A}$ we have that the actions of $X$ and $Y$ are given by

\begin{equation*}
X =
\begin{pmatrix}
0 & 0 & 0 & 0 \\
I & 0 & 0 & 0 \\
0 & 0 & 0 & I \\
0 & 0 & 0 & 0
\end{pmatrix}
\hspace{20 pt}
Y =
\begin{pmatrix}
0 & 0 & 0 & 0 \\
0 & 0 & 0 & 0 \\
0 & I & 0 & 0 \\
J_m(\lambda) & 0 & 0 & 0
\end{pmatrix}
\end{equation*}

where each entry in $X$ and $Y$ is an $m \times m$ matrix and $I$ represents the identity. So the matrix 'form' of $X$ and $Y$ is determined by the diagram, as it was for string modules. It is necessary to modify our earlier notation by writing, for example, $X(z_1^*) = z_2^*$, $X(z_2^*) = 0$, and $Y(z_1^*) = z_4^*$, where $z_i^*$ is the subspace of $M(w,m,\lambda)$ with basis $\mathcal{B}z_i$. We also write, for example, $\text{Ker}(Y) = \{ z_3^*, z_4^* \}$. In the diagram above we continue to call $z_3^*$ a sink and $z_1^*$ a source. There are also isomorphisms of band modules given by $M(w,m,\lambda) \simeq M(w^{-1},m,\lambda^{-1})$ and $M(w,m,\lambda) \simeq M(w',m,\lambda)$ if $w'$ is obtained by cyclically permuting the letters of $w$.  It is also convenient to define an equivalence relation on $\mathcal{W}$ by identifying $w$ with $w'$ and any cyclic permutation of $w$. We now characterize the band modules with symmetric endomorphism algebras.

\begin{thm}\label{BandModules}Suppose $M(w,m,\lambda)$ is a band module and define $\mathbb{E} = E_{\Lambda}(M(w,m,\lambda))$. Then $\mathbb{E}$ is quasi-Frobenius precisely when (1) $w$ is equivalent to $ab$, or (2) $m = 1$ and $w$ is equivalent to a word in

$$\{ ab^{-1},aba^{-1}b^{-1},abab^{-1}a^{-1}b^{-1},\ldots \}$$

Whenever $\E$ is quasi-Frobenius, it is weakly symmetric. Moreover, $\mathbb{E}$ is symmetric only in case (1), in case (2) for $|w| = 2$, or in case (2) when $|w| \equiv_4 0$ and $\lambda = 1$.\end{thm}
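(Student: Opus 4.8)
The plan is to mimic the analysis carried out for string modules, working throughout with $\mathbb{E}$ realized as the algebra of matrices commuting with the block matrices $X$ and $Y$ determined by the band diagram. First I would set up the same two symmetry reductions used in Theorem \ref{StringModules}: swapping $a\leftrightarrow b$, $a^{-1}\leftrightarrow b^{-1}$ leaves $\mathbb{E}$ unchanged, and swapping $a\leftrightarrow a^{-1}$, $b\leftrightarrow b^{-1}$ replaces $\mathbb{E}$ by $\mathbb{E}^t\simeq\mathbb{E}^{\mathrm{op}}$; combined with the cyclic-permutation and $w\mapsto w^{-1}$ isomorphisms of band modules, this lets me normalize the band to a convenient representative in each equivalence class. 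Then I would compute, exactly as in the string case, the linear conditions on a matrix $T=(T_{ij})$ (now with $m\times m$ blocks) forcing $T$ to commute with $X$ and $Y$: a block $T_{ij}$ must vanish unless the letters emanating from $z_i$ and $z_j$ match, equal blocks are forced along "diagonals" of the band, and the single long arrow of the band (the $J_m(\lambda)$ entry) imposes the extra relation $T_{j+1,\,?}\,J_m(\lambda)=J_m(\lambda)\,T_{?,\,j+1}$ tying the two ends of the band together — this is what makes the band case genuinely different from the string case and is where the scalar $\lambda$ and the congruence $|w|\bmod 4$ will enter.

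Next I would identify when $\mathbb{E}$ fails to be quasi-Frobenius by the socle argument: $\mathbb{E}$ is local, so ${_{\mathbb{E}}\mathbb{E}}$ is indecomposable projective, and I produce submodules (spanned by the $E$-like blocks landing in a fixed "source" block) whose socles are independent; if the diagram has two sources, or if $m\geq 2$ and $w$ is not equivalent to $ab$, the socle is at least $2$-dimensional and $\mathbb{E}$ is not quasi-Frobenius. Conversely, in case (1) $w\sim ab$: here the diagram is the single two-cycle with the long $b$-arrow, and $\mathbb{E}$ should come out isomorphic to the centralizer $\mathcal{C}$ of a single Jordan block — so $\mathbb{E}\simeq k[T]/(T^m)$ is symmetric by Lemma \ref{JCommute}, regardless of $m$ and $\lambda$. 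In case (2), with $m=1$, each block is a scalar and the commuting conditions collapse to a system of scalar equations; I would write down an explicit basis $\{T_0=I,T_1,\ldots\}$ for $\mathbb{E}$ indexed by "shifts" around the band, show $T_iT_j=T_{i+j}$ with a suitable convention wrapping around using the factor $\lambda$, verify $\dim\mathrm{Soc}({_{\mathbb{E}}\mathbb{E}})=1$ to get quasi-Frobenius (hence weakly symmetric since $\mathbb{E}$ is local), and then decide when $\mathbb{E}$ is monogenic: it is generated by $T_1$ precisely when the shift generates all of $\mathbb{E}$, which by a direct index count happens when $|w|\equiv_4 0$, and then the wrap-around relation reads $T_1^{|w|/2}=\lambda\cdot I$ (up to sign/normalization), which is a symmetrizing-form situation exactly when $\lambda=1$ — when $|w|=2$ it is trivially monogenic, and for $|w|\equiv_4 2$ the algebra is a non-monogenic weakly symmetric algebra that I would show is not symmetric by exhibiting two elements $a,b$ with $\lambda(ab)\neq\lambda(ba)$ for every candidate form, equivalently by checking the commutator space $[\mathbb{E},\mathbb{E}]$ meets the socle.

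The main obstacle I expect is the last step: pinning down exactly when $\mathbb{E}$ is symmetric as opposed to merely weakly symmetric. Weak symmetry is cheap (it follows from locality plus $1$-dimensional socle), but distinguishing symmetric from weakly-symmetric requires understanding the commutator subspace $[\mathbb{E},\mathbb{E}]$ and showing the socle is or is not contained in it; this is where the arithmetic of $|w|\bmod 4$ and the value of $\lambda$ must be extracted carefully. Concretely, after reducing to the explicit basis $\{T_i\}$ with the wrap relation, I would compute $[\mathbb{E},\mathbb{E}]$ — in the $|w|\equiv_4 0$ case the generator relation $T_1^{|w|/2}=\lambda I$ makes $\mathbb{E}\simeq k[S]/(S^{|w|/2}-\lambda)\cong k[S]/(S^{|w|/2}-1)$-type only when $\lambda=1$ lets us absorb the constant, and otherwise the top of the algebra is not in the image of a trace form — and in the $|w|\equiv_4 2$ case I would show directly that $\mathbb{E}$ has a two-dimensional quotient that is commutative but whose multiplication is not "balanced," contradicting symmetry. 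Verifying that these are the only cases and that no further $\lambda$-dependence hides in the $|w|\equiv_4 2$ branch is the delicate bookkeeping I would be most careful about.
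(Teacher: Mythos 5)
Your overall strategy (realize $\mathbb{E}$ as block matrices commuting with $X$ and $Y$, use locality of $\mathbb{E}$ and a socle-dimension count to rule out quasi-Frobenius, then separate symmetric from weakly symmetric by deciding whether the socle meets $[\mathbb{E},\mathbb{E}]$) matches the paper's, and your treatment of case (1) and of the two-sources and $m\geq 2$ obstructions is essentially correct. But there are two genuine gaps in case (2). First, your structural picture of $\mathbb{E}$ is wrong: you posit a single ``shift'' generator $T_1$ with $T_iT_j=T_{i+j}$ and a wrap-around relation $T_1^{|w|/2}=\lambda I$. That would make $\mathbb{E}$ commutative and monogenic, hence symmetric for \emph{every} $\lambda$ by Lemma \ref{Monogenic} --- contradicting the very conclusion you are trying to prove. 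The actual algebra is noncommutative: it is generated by \emph{two} nilpotent elements $T_2$ and $T_n$ (one for each arm of the band running from the source to the sink), with $T_2^2=T_n^2=0$ and the single relation $(T_2T_n)_p=\mu(T_nT_2)_p$, where $\mu$ depends on $\lambda$ and on $|w|\bmod 4$. The symmetry dichotomy comes from computing the commutator $[T_2,(T_nT_2)_{p-1}]$, which equals $(1-\mu^{-1})(T_2T_n)_p$ or $(T_2T_n)_p$ according to the parity of $p$; since $(T_2T_n)_p$ spans a one-dimensional ideal, any symmetrizing form must be nonzero on it, so $\mathbb{E}$ is symmetric exactly when this commutator vanishes. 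Your fallback of exhibiting ``a two-dimensional commutative quotient whose multiplication is not balanced'' cannot work: commutative algebras have zero commutator space, and symmetry is not detected on quotients.

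Second, you never dispose of the bands with $m=1$, a unique source, and a unique sink for which the sink is \emph{not} antipodal to the source (e.g.\ $w=abab^{-1}$). These have one source, so your two-source criterion does not exclude them, yet they are not in the list defining case (2) and their endomorphism algebras are not quasi-Frobenius. The paper handles this by explicitly constructing a second socle element $\zeta=E_{l+1,1}+\lambda^{-1}E_{ln}$ (or $E_{l+1,1}+E_{l2}$), verifying $k\zeta$ is a simple submodule independent of $kE_{l1}$, so $\dim\mathrm{Soc}({_{\mathbb{E}}\mathbb{E}})\geq 2$. Without some such argument your classification of the quasi-Frobenius bands is incomplete: you would wrongly admit every one-source band with $m=1$ into case (2).
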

\begin{proof}
Let $n = |w| > 0$. We write $E_{ij}$ for the $n \times n$ matrix with zeroes everywhere except for a 1 in the $(i,j)$th entry, and given an $m \times m$ matrix $A$ we write $E_{ij} \otimes A$ for the $nm \times nm$ matrix formed by taking the Kronecker product of $E_{ij}$ and $A$. If $z_i^*$ is a sink, $z_j^*$ is a source, and $A$ is an arbitrary $m \times m$ matrix, then by the same argument as in Theorem \ref{StringModules} we see that $X$ and $Y$ annihilate $E_{ij} \otimes A$ on the left and right, and hence $E_{ij} \otimes A \in \mathbb{E}$. Given $T \in \mathbb{E}$ we can write $T = \sum E_{lk} \otimes A_{lk}$ for some $m \times m$ matrices $A_{lk}$. Then

$$0 = TX(E_{ij} \otimes I) = XT(E_{ij} \otimes I) = \sum_l X(E_{lj} \otimes A_{li}) = \sum_{\forall l \exists k (X(z_l^*) = z_k^*)} E_{kj} \otimes A_{li}$$

Since there is at most one $z_l^*$ for which $X(z_l^*) = z_k^*$, we see that $A_{li} = 0$ whenever $X(z_l^*) \not= 0$. Similarly, $A_{li} = 0$ whenever $Y(z_l^*) \not= 0$ since $J_m(\lambda)$ is an invertible matrix. Thus, if $A \in M_m(k)$, $z_i^*$ is a sink, and $z_j^*$ is a source, then

\begin{equation}\label{bandT}
T(E_{ij} \otimes A) = T(E_{ij} \otimes I)(I \otimes A) = \sum_{l : \text{$z_l^*$ is a sink}} E_{lj} \otimes A_{li}A
\end{equation}

In other words, we obtain a submodule $N_j$ of ${_{\E} \E}$ by fixing a source $z_j^*$, and letting $N_j$ be spanned by all $E_{lj} \otimes A$ where $A \in M_m(k)$ is arbitrary and $z_l^*$ is an arbitrary sink. Clearly $N_j \cap N_{j'} = 0$ for $j \not= j'$, and ${_{\E} \E}$ is an indecomposable projective since $\mathbb{E}$ is a local algebra. In particular, if the diagram has at least two sources then $\mathbb{E}$ is not quasi-Frobenius since $\text{Soc}({_{\E} \E})$ is at least 2-dimensional.

If there is no source, then the diagram is cyclic and hence $|w| = 2$ since $w$ is not a power of any of its proper subwords. So $w$ is equivalent to $a^{-1}b^{-1}$, and we have

\begin{equation*}
X =
\begin{pmatrix}
0 & I \\
0  & 0
\end{pmatrix}
\hspace{20 pt}
Y =
\begin{pmatrix}
0 & 0 \\
J_m(\lambda) & 0
\end{pmatrix}
\end{equation*}

It is easy to see that $\mathbb{E}$ consists of all matrices of the form $I \otimes A$ where $I$ is the $2 \times 2$ identity matrix and $A \in \mathcal{C}$, as per the notation of Lemma \ref{JCommute}. In particular, $\mathbb{E}$ is a symmetric algebra.

So we assume that the diagram has one source, in which case it must also have a sink. If there are two sinks, then there must be at least two sources, and hence there is precisely one source and one sink. Because $w$ may be replaced with its cyclic permutations, we may assume that $z_1^*$ is a source. So if $w = l_1 \cdots l_n$ then either $(l_1,l_n) = (a,b^{-1})$ or $(l_1,l_n) = (b,a^{-1})$. However, switching $a$ with $b$ and $a^{-1}$ with $b^{-1}$ has the effect of switching $X$ with $Y$, which leaves $\mathbb{E}$ unaffected. So we may assume $l_1 = a$ and $l_n = b^{-1}$, and we also write $z_l^*$ for the sink. We first show that $m = 1$ if $\mathbb{E}$ is quasi-Frobenius. To this end, we claim that if $T = \sum E_{ij} \otimes A_{ij}$ then $A_{ii} = A_{11}$ for all $i$ and $A_{11} \in \mathcal{C}$. If $X(z_r^*) = z_s^*$ then $X(E_{rs} \otimes I) = E_{ss} \otimes I$ and hence

\begin{equation*}
\begin{split}
\sum_{i'} E_{i's} \otimes A_{i's} &= T(E_{ss} \otimes I) = TX(E_{rs} \otimes I) \\
&= XT(E_{rs} \otimes I) = X\sum_i E_{is} \otimes A_{ir} = \sum_{\forall i \exists k (X(z_i^*) = z_k^*)} E_{ks} \otimes A_{ir}\\
\end{split}
\end{equation*}

Taking $i' = s$ on the left-hand side and $i = r$ on the right hand side yields $E_{ss} \otimes A_{ss} = E_{ss} \otimes A_{rr}$ and so $A_{rr} = A_{ss}$. If $z_r^* \not= z_1^*$ and $Y(z_r^*) = z_s^*$, then $Y(E_{rs} \otimes I) = E_{ss} \otimes I$ and hence $A_{rr} = A_{ss}$, by the same argument. When $z_r^* = z_1^*$ we obtain $Y(z_1^*) = z_n^*$ and $Y(E_{1n} \otimes I) = E_{nn} \otimes J_m(\lambda)$. So analogous computations yield

$$\sum_{i'} E_{i'n} \otimes A_{i'n}J_m(\lambda) = E_{nn} \otimes J_m(\lambda)A_{11} + \sum_{\forall i \not= 1 \exists k (Y(z_i^*) = z_k^*)} E_{kn} \otimes A_{i1}$$

In particular, by taking $i' = n$ we see that $A_{nn}J_m(\lambda) = J_m(\lambda)A_{11}$. So $A_{ii} = A_{11}$ for all $i$ and $A_{11} \in \mathcal{C}$. Notice that (\ref{bandT}) becomes

$$T(E_{l1} \otimes A) = E_{l1} \otimes A_{ll}A = E_{l1} \otimes A_{11}A$$

since $z_l^*$ is the unique sink. Because $\mathcal{C}$ consists of upper triangular matrices, for $E_{1i} \in M_m(k)$ we see that $A_{11}E_{1i}$ is a multiple of $E_{1i}$. Hence, the $k$-linear span $S_i$ of $E_{l1} \otimes E_{1i}$ is a 1-dimensional submodule of ${_{\E} \E}$, and so $\bigoplus_{i=1}^m S_i \subseteq \text{Soc}({_{\E} \E})$. In particular, if $\mathbb{E}$ is quasi-Frobenius then $1 = \dim \text{Soc}({_{\E} \E}) \geq \dim \bigoplus_{i=1}^m S_i = m$ and hence $m = 1$.

So we assume $m = 1$, and continue our convention that $z_1$ is a source and $z_l$ is a sink. Since $n = |w|$ is a positive even integer, we can write $n = 2p$. There are two cases to consider: $z_l \not= z_{p+1}$ and $z_l = z_{p+1}$. If $z_l \not= z_{p+1}$ then $n > 2$, and $2 \leq l \leq p$ or $p+1 < l \leq n$. If $p+1 < l \leq n$ then replacing $w$ with $w^{-1}$ results in a diagram with $z_1$ as source and $z_{2(p+1)-l}$ as sink, where $2 \leq 2(p+1)-l \leq p$. So we suppose $2 \leq l \leq p$, and we may assume the following situation

\begin{figure}[here]
$$\xymatrix{z_1 \ar@/_2pc/[rrrrrrr]_b \ar[r]^a & \cdots \ar[r] & z_{l-1} \ar[r] & z_l & z_{l+1} \ar[l] & z_{l+2} \ar[l] & \ar[l] \cdots & \ar[l] z_n}$$
\label{fig5.3}
\end{figure}

where $l+2, 2l \leq n$ since $p > 1$. It is clear that if $Z \in \{X,Y\}$ then $z_i \in \text{Ker}(Z)$ precisely when $z_i^* = Z(z_j^*)$ for some $z_j^*$. Now as in the proof to Theorem \ref{StringModules} we see that $T \in \mathbb{E}$ precisely when it obeys the two rules:

\begin{equation}\label{rules}
\begin{split}
\text{$z_i \not\in \text{Ker}(Z)$ and $z_s \in \text{Ker}(Z)$} &\rightarrow t_{is} = 0 \\
\text{$\mu_1 z_i = Z(z_j)$ and $\mu_2 z_s = Z(z_r)$ for $\mu_i \in \{1,\lambda\}$} &\rightarrow \mu_2 t_{is} = \mu_1 t_{jr} \\
\end{split}
\end{equation}

Define an element $\zeta$ of $M_n(k)$ by

$$\zeta = \begin{cases}
E_{l+1,1} + \lambda^{-1}E_{ln} & \text{if $Y(z_{l+1}) = z_l$}\\
E_{l+1,1} + E_{l2} & \text{if $X(z_{l+1}) = z_l$}\\
\end{cases}$$

We suppose that $Y(z_{l+1}) = z_l$; the argument for $X(z_{l+1}) = z_l$ is similar and in fact simpler. Note that $X\zeta = 0$ since $z_{l+1}, z_l \in \text{Ker}(X)$, and $\zeta X = 0$ since $z_1, z_n \not\in \text{Ker}(X)$. Moreover, $Y\zeta = E_{l,1}$ since $Y(z_{l+1}) = z_l$ and $Y(z_l) = 0$, and $\zeta Y = E_{l,1}$ since $Y(z_1) = \lambda z_n$ and $z_1 \not\in \text{Ker}(Y)$. In particular, $\zeta \in \mathbb{E}$. We claim that $k\zeta$ is a submodule of ${_{\E} \E}$. So suppose $T \in \mathbb{E}$, write $T = \sum t_{ij}E_{ij}$ for scalars $t_{ij}$, and let $Z \in \{X,Y\}$. Observe that $T\zeta = \sum_i t_{i,l+1}E_{i1} + \lambda^{-1}t_{il}E_{in}$. By the first rule, $t_{il} = 0$ for $i \not= l$ and $t_{i,l+1} = 0$ whenever $X(z_i) \not= 0$; and by the second rule, $t_{i,l+1} = \mu t_{jl}$ for some $\mu \not= 0$ whenever $Y(z_i^*) = z_j^*$. So $t_{i,l+1} = 0$ for $i \not= l,l+1$ and $t_{l+1,l+1} = t_{ll}$. Thus, $T\zeta = t_{l,l+1}E_{l1} + t_{ll}\zeta$ and it suffices to show that $t_{l,l+1} = 0$ for all $T \in \mathbb{E}$. Because $Y(z_{l+1}) = z_l$, induction on the second rule yields $t_{l,l+1} = t_{1,2l}$. Since $z_{2l}$ is not the sink and $z_1$ is the source, $t_{1,2l} = 0$ by the first rule. Therefore, $k\zeta$ is a nonzero (simple) submodule of ${_{\E} \E}$, and $k\zeta \oplus kE_{l1} \subseteq \text{Soc}({_{\E} \E})$ so that $\dim \text{Soc}({_{\E} \E}) \geq 2$ and hence $\mathbb{E}$ is not quasi-Frobenius.

Lastly, we assume $z_l = z_{p+1}$ and show that $\mathbb{E}$ is a Frobenius algebra. Graphically, this means that the two 'paths' in the diagram between the source and the sink have the same length and these two paths are the unique paths in the diagram of maximal length. If $n = 2$ then $\mathbb{E}$ is 2-dimensional and hence symmetric by Lemma \ref{Monogenic}. So for convenience, we assume $n > 2$. Define a relation $\mathcal{R}$ on $I = \{(i,j) : 1 \leq i,j \leq n\}$ by $(j,r)\mathcal{R}(i,s)$ if $Z(z_j^*) = z_i^*$ and $Z(z_r^*) = z_s^*$ for some $Z \in \{X,Y\}$. We also write $Z : (j,r) \rightarrow (i,s)$. Let $\sim$ denote the equivalence relation on $I$ induced by $\mathcal{R}$. Write $I_0$ for the set of all $(i,s)$ with $z_i^* \not\in \text{Ker}(Z)$ and $z_s^* \in \text{Ker}(Z)$ for some $Z \in \{X,Y\}$, and define $I_0^*$ as all $(i,s)$ with $(i,s) \sim (i',s')$ for some $(i',s') \in I_0$, so that $I_0^*$ is a union of equivalence classes. For $(i,s) \in I$ write $[i,s]$ for the equivalence class containing $(i,s)$. We will show that $\mathbb{E}$ has a basis indexed by $\{ [i,1] : 1 \leq i \leq n \}$. To do this, we first show that $\{ (i,1) : 1 \leq i \leq n \}$ are distinct representatives for the classes $I \setminus I_0^*$.

To begin, if $(j,r)\mathcal{R}(i,s)$ and $(j,r)\mathcal{R}(i',s')$ with $(i,s) \not= (i',s')$, then there are $Z_1, Z_2 \in \{X,Y\}$ with $Z_1 \not= Z_2$, $Z_1 : (j,r) \rightarrow (i,s)$, and $Z_2 : (j,r) \rightarrow (i',s')$. But then $z_j = z_r = z_1$ since $z_1$ is the unique source. Suppose $(j,r)\mathcal{R}(i,s)$ and $(j',r')\mathcal{R}(i,s)$ with $(j,r) \not= (j',r')$. Again, there are $Z_1, Z_2 \in \{X,Y\}$ with $Z_1 \not= Z_2$ and $Z_1 : (j,r) \rightarrow (i,s)$ and $Z_2 : (j',r') \rightarrow (i,s)$. From $\{ Z_1(z_k^*) \} \cap \{ Z_2(z_k^*) \} = \{ z_{p+1}^* \}$ we get $z_i = z_s = z_{p+1}$. Now it is easy to see that $[p+1,p+1] = \{(i,i) : 1 \leq i \leq n\}$. Therefore, if $(i,s) \in I$ with $i \not= s$, then the elements in $[i,s]$ are linearly ordered by $\mathcal{R}$. That is, there is a (unique) maximal chain

$$\xymatrix{(j_1,r_1)\mathcal{R}(j_2,r_2) & \cdots & (j_{t-1},r_{t-1})\mathcal{R}(j_t,r_t)}$$

with $(j_u,r_u) = (i,s)$ for some $u$, in which case $[i,s] = \{ (j_v,r_v) : 1 \leq v \leq t \}$. Since $z_1$ is a source, $(i,1)$ for $i \not= 1$ is the left-most element in any such chain in which it appears. Let the above denote a maximal chain for $(i,1)$ with $i \not= 1$, and suppose $(j_v,r_v) \in I_0$ for some $1 \leq v \leq t$. Clearly $v > 1$, and so there is $Z_1 : (j_{v-1},r_{v-1}) \rightarrow (j_v,r_v)$. Let $Z_2 \in \{X,Y\} \setminus \{Z_1\}$ and note that $j_v \in \text{Ker}(Z_1)$. So we must have $j_v \not\in \text{Ker}(Z_2)$ and $r_v \in \text{Ker}(Z_1) \cap \text{Ker}(Z_2) = \{ z_{p+1} \}$. But since $r_1 r_2 \cdots r_v$ represents a 'directed path' in the diagram with $r_1 = 1$, the path $r_1 \cdots r_v$ must have maximal length, and thus so too does $j_1 \cdots j_v$, in which case $j_v = z_{p+1}$. This contradicts the assumption $i \not= 1$ and thus shows that $(i,1) \not\in I_0^*$. This is in fact the only place where we needed the assumption that $z_{p+1}$ is the sink. So $\{ (i,1) : 1 \leq i \leq n \}$ represent distinct conjugacy classes.

It remains to show that if $(i,s) \in I \setminus I_0^*$ then $(i,s) \sim (k,1)$ for some $k$. If $s = 1$ then we are done, and so we assume $2 \leq s < p+1$. Since $z_s$ is not a source, there is $Z \in \{X,Y\}$ with $Z(z_{s-1}^*) = z_s^*$. If $z_i^* \not\in \{Z(z_k^*)\}$ then $Z(z_i^*) \not= 0$ and $Z(z_s^*) = 0$; a contradiction. So $z_i^* = Z(z_j^*)$ for some $z_j^*$, and hence $(i,s) \sim (j,s-1)$ with $(j,s-1) \not\in I_0^*$. By induction, $(j,s-1) \sim (k,1)$ for some $k$. A similar inductive argument holds if $p+1 \leq s \leq n$.

Returning to $\mathbb{E}$, if for $T \in M_n(k)$ we write $T = \sum t_{is} E_{is}$, then (\ref{rules}) asserts $T \in \E$ iff $t_{is} = 0$ for $(i,s) \in I_0^*$ and $\mu_2 t_{is} = \mu_1 t_{jr}$ whenever $Z : (i,s) \rightarrow (j,r)$. By the second rule we obtain $t_{11} = t_{ii}$ for $1 \leq i \leq n$. So if $i \not=1$, then by the fact that $\mathcal{R}$ linearly orders each equivalence class from $I \setminus I_0^*$, we see that $t_{i1}$ uniquely determines $t_{jr}$ for $(j,r) \in [i,1]$. More precisely, we obtain basis elements $T_i$ of $\E$ for $1 \leq i \leq n$ by setting $t_{j1} = \delta_{ij}$. Note that $T_1 = 1_{\E}$. Since $\mathbb{E}$ is a local algebra, $J(\E)$ consists of all non-units in $\mathbb{E}$. If $T_i$ is a unit, then write $T_i = \sum t_{jk} E_{jk}$ and note that $t_{1k} \not= 0$ for some $k$. However, $(1,k) \in I_0$ for $k \not= 1$, and so $k = 1$. Hence, $(1,1) \sim (i,1)$ so that $T_i = T_1$. Thus, $\{ T_i : 2 \leq i \leq n \}$ is a basis of $J(\E)$.

We now consider multiplication of $\{ T_i \}$. Let $T_{k_1}, T_{k_2} \in J(\E)$ and suppose $T_{k_2}T_{k_1} = \sum t_k T_k$ for some scalars $t_k$. If $k$ is such that $t_k \not= 0$, then $E_{k1} = E_{ij}E_{lm}$ for some $(i,j) \in [k_2,1]$ and $(l,m) \in [k_1,1]$. In particular, $i = k$, $j = l$, and $m = 1$. So $l = k_1$ and $(k_2,1) \sim (k,k_1)$. Since $k_2 \not= 1$ we know that there is a maximal chain with $(k_2,1)$ as its left-most element, and from the chain we obtain two directed paths in the diagram. In particular, since $(k,k_1)$ appears in this chain with $k_1$ fixed, there is at most one $k$ for which $t_k \not= 0$. In fact, if $T_{k_2}(T_{k_1}(z_1)) = \mu z_k$ for some $\mu \not= 0$ then $T_{k_2}T_{k_1} = \mu T_k$, and if $T_{k_2}(T_{k_1}(z_1)) = 0$ then $T_{k_2}T_{k_1} = 0$. Note that $(2,1)$ and $(n,1)$ have maximal chains given by the following:

$$(2,1) \rightarrow (3,n) \rightarrow (4,n-1) \rightarrow \cdots \rightarrow (p+1,p+2)$$
$$(n,1) \rightarrow (n-1,2) \rightarrow (n-2,3) \rightarrow \cdots \rightarrow (p+1,p)$$

where we have suppressed $\mathcal{R}$ in favor of $\rightarrow$. In particular, the sequence $T_2(z_1^*)$, $T_nT_2(z_1^*)$, $T_2T_nT_2(z_1^*)$, etc. is given by $z_2^*$, $z_{n-1}^*$, $z_4^*$, $z_{n-3}^*, \ldots, z_{p+1}^*,0$, and the $T_n(z_1^*)$, $T_2T_n(z_1^*)$, $T_nT_2T_n(z_1^*)$, etc. is given by $z_n^*$, $z_3^*$, $z_{n-2}^*$, $z_5^*$, $\ldots, z_{p+1}^*,0$. This shows that the unitary subalgebra of $\mathbb{E}$ generated by $T_2$ and $T_n$ equals $\mathbb{E}$. Moreover by (\ref{rules}) we have

\begin{equation}\label{T2TN}
\begin{split}
T_2 &= E_{21} + \lambda^{-1}E_{3n} + \lambda^{-1}E_{4,n-1} + \cdots + \lambda^{-1}E_{p+1,p+2} \\
T_n &= E_{n1} + E_{n-1,2} + E_{n-2,3} + \cdots + E_{p+1,p}
\end{split}
\end{equation}

It is convenient to introduce some notation: given an algebra $A$ and $x,y \in A$ define $(xy)_i = xyx \cdots$ where we take the product of $i$ many elements. Since $T_2(T_2(z_1^*)) = T_2(z_2^*) = 0$ and $T_n(T_n(z_1^*)) = 0$, we obtain $T_2^2 = T_n^2 = 0$, and also $(T_2T_n)_p  = \mu(T_n T_2)_p$ for some $0 \not= \mu \in k$. In fact, we can use (\ref{T2TN}) to check that $\mu = 1$ if $n \equiv_4 2$ and $\mu = \lambda^{-1}$ if $n \equiv_4 0$. In particular, $\E$ has the basis

$$\{ T_1, T_2, T_n, T_2T_n, T_nT_2, \ldots, (T_2T_n)_{p-1}, (T_nT_2)_{p-1}, (T_2T_n)_p \}$$

Define $\eta : \E \rightarrow k$ by sending the first $n-1$ basis elements to zero and $\eta((T_2T_n)_p) = 1$. From $T_2^2 = T_n^2$ it is easy to show that $\text{Ker}(\eta)$ contains no nonzero left or right ideals, so that $\E$ is a Frobenius algebra and hence weakly symmetric (as per remarks in section 2). If $n \equiv_4 2$ (and $n > 2$) then $p$ is odd and $[T_2,(T_nT_2)_{p-1}] = (T_2T_n)_p$. So any symmetrizing form $\eta' : \E \rightarrow k$ vanishes on $(T_2T_n)_p$, which is a contradiction since $(T_2T_n)_p$ generates a 1-dimensional ideal in $\E$. Thus, $\E$ is not symmetric. The same is true if $n \equiv_4 0$ and $\lambda \not= 1$. On the other hand, suppose $n \equiv_4 0$ and $\lambda = 1$, and note that there is a grading of $\E$ obtained by assigning $T_2$ and $T_n$ the weight one. By definition, $\eta$ vanishes on homogeneous elements with weight unequal to $p$. Since $[\E,\E]$ is a graded subspace of $\E$ and the only commutator with weight $p$ equals $(T_2T_n)_p-(T_nT_2)_p = 0$, we see that $\eta$ is a symmetrizing form for $\E$. The proof is complete.
\end{proof}

We now turn briefly to the consideration of the indecomposable $kD_{4q}$-modules. These fall into one of three types: the left regular module; string modules $M_w$ where $w$ does not contain $(ab)^q$, $(ba)^q$, or their inverses as subwords; and $M(w,m,\lambda)$ where no power of $w$ contains $(ab)^q$, $(ba)^q$, or their inverses as subwords. Since $E_{D_{4q}}(M) \simeq E_{\Lambda}(M)$ in the second and third cases, we may make use of Theorems \ref{StringModules} and \ref{BandModules}. For instance, if $q = 1$ then the only indecomposable $k(\mathbb{Z}_2 \times \mathbb{Z}_2)$-modules with symmetric endomorphism algebras are $k$, $M_a$, $M_b$, $M(ab^{-1},1,\lambda)$ for $0 \not= \lambda \in k$, and the left regular module. More generally, we see that $kD_{4q}$ has infinitely many indecomposable modules with symmetric endomorphism algebra, a result that will be established in greater generality in section 5. The consideration of when $E_{D_{4q}}(M)$ is symmetric for $M$ an arbitrary $kD_{4q}$-module will be postponed until section 5, where we will consider more generally the case of local algebras.

\section{Nakayama Algebras and Uniserial Modules}

In this section we extend the analysis carried out for cyclic groups to a larger class of algebras known as Nakayama algebras. Recall that $\Lambda$ is Nakayama if its left and right regular modules are direct sums of uniserial modules. In Theorem \ref{Nakayama} we shall classify the $\Lambda$-modules whose endomorphism algebra is symmetric. In fact, several of our methods are applicable to uniserial modules for an arbitrary algebra, and we shall begin with these, after first providing a useful lemma.

\begin{lemma}\label{notSym}Suppose $\Lambda$ is an algebra with modules $M_1$ and $M_2$. If there is $0 \not= \beta \in \text{Hom}_{\Lambda}(M_2,M_1)$ such that $\beta^*(\text{Hom}_{\Lambda}(M_1,M_2)) = 0$ then $\mathbb{E} = E_{\Lambda}(M_1 \oplus M_2)$ is not symmetric.\end{lemma}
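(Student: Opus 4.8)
The plan is to construct an explicit linear functional that detects an obstruction to symmetry, following the general philosophy that a symmetrizing form must vanish on all commutators but cannot vanish on a generator of a one-dimensional ideal. Here the relevant "bad" element of $\mathbb{E}$ will be built from $\beta$. Concretely, I regard $\mathbb{E} = E_\Lambda(M_1 \oplus M_2)$ as the algebra of $2\times 2$ "matrices" $\begin{pmatrix} \alpha_{11} & \alpha_{12} \\ \alpha_{21} & \alpha_{22}\end{pmatrix}$ with $\alpha_{ij} \in \mathrm{Hom}_\Lambda(M_j, M_i)$, multiplied in the usual way, and I single out the element $b_\beta$ which is $\beta$ in the $(1,2)$-slot and zero elsewhere. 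The hypothesis $\beta^*(\mathrm{Hom}_\Lambda(M_1,M_2)) = 0$ says precisely that for every $\gamma \in \mathrm{Hom}_\Lambda(M_1,M_2)$ we have $\beta \circ \gamma = 0$; equivalently, $b_\beta \cdot \mathbb{E} \cdot b_\beta = 0$, since any product through $b_\beta$ twice must pass through a $(2,1)$-entry $\gamma \in \mathrm{Hom}_\Lambda(M_1,M_2)$ and $\beta\gamma = 0$.

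Next I want to produce from $b_\beta$ a nonzero element lying in a one-dimensional two-sided ideal, or at least an element on which any symmetrizing form is forced to vanish while simultaneously being forced to be nonzero. The cleanest route: suppose toward a contradiction that $\lambda : \mathbb{E} \to k$ is a symmetrizing form, so $\lambda$ is nondegenerate and $\lambda(xy) = \lambda(yx)$ for all $x,y$. Since $\lambda$ is nondegenerate and $b_\beta \neq 0$, there is some $u \in \mathbb{E}$ with $\lambda(u\, b_\beta) \neq 0$. Decompose $u$ into its four matrix components; only the $(2,1)$-component $\gamma := u_{21} \in \mathrm{Hom}_\Lambda(M_1,M_2)$ contributes to $u\, b_\beta$ in a way $\lambda$ can see — indeed $u\, b_\beta$ has its only possibly-nonzero entry $\beta\gamma$ in the $(1,1)$-slot... but $\beta\gamma = 0$ by hypothesis. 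Hence $u\, b_\beta = 0$ for every $u$, contradicting nondegeneracy of $\lambda$. Wait — I must double-check this: $u\,b_\beta$ has $(i,1)$-entry equal to $u_{i2}\circ\beta$, and the $(1,1)$-entry is $u_{12}\circ\beta$ with $u_{12}\in\mathrm{Hom}_\Lambda(M_2,M_1)$, which need not vanish. So it is $b_\beta\cdot u$ that I should use: $(b_\beta u)$ has $(1,j)$-entry $\beta\circ u_{2j}$, and in particular its $(1,2)$-entry is $\beta\circ u_{22}$, not obviously zero either. The correct statement to exploit is $b_\beta\,\mathbb{E}\,b_\beta = 0$, so I should look at $\lambda(x b_\beta)$ where $x$ is constrained so that $xb_\beta$ feeds back into $b_\beta$; that is, use the cyclicity $\lambda(x b_\beta) = \lambda(b_\beta x)$ and note that multiplying by $b_\beta$ on the appropriate side lands in the image/preimage structure killed by $\beta$.

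Let me therefore restructure the final argument as follows, which I expect to be the crux. Consider the left ideal $\mathbb{E}b_\beta$ and the right ideal $b_\beta\mathbb{E}$; their product $b_\beta\mathbb{E}\cdot\mathbb{E}b_\beta \subseteq b_\beta\mathbb{E}b_\beta = 0$. If $\lambda$ is a symmetrizing form, then for any $x \in \mathbb{E}b_\beta$ and $y \in b_\beta\mathbb{E}$ we get $\lambda(xy) = 0$, hence $\lambda(yx) = 0$; thus $\lambda$ annihilates the left ideal $(b_\beta\mathbb{E})(\mathbb{E}b_\beta)$... which is again zero, giving nothing. The genuinely usable fact is: $\lambda$ annihilates $\mathbb{E}b_\beta\mathbb{E}b_\beta$? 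No. The honest obstruction is that the two-sided ideal $J = \mathbb{E}b_\beta\mathbb{E}$ satisfies $\lambda(J^2)$ controlled by $b_\beta\mathbb{E}b_\beta=0$ only after a cyclic rearrangement, and a short computation shows $\lambda(\mathbb{E}b_\beta\mathbb{E}b_\beta\mathbb{E}) = \lambda(b_\beta\mathbb{E}b_\beta\mathbb{E}\mathbb{E}) = 0$. Since $b_\beta \neq 0$ lies in $J$ but also $b_\beta \in b_\beta\mathbb{E} \subseteq \mathbb{E}b_\beta\mathbb{E}$, and one checks $b_\beta \notin \mathbb{E}b_\beta\mathbb{E}\cdot\mathbb{E}b_\beta\mathbb{E}$ is not automatic — so instead I will pin down the argument using the socle: the element $b_\beta$ (or a suitable nonzero multiple $\alpha b_\beta$ with $\alpha$ chosen so the entry is a simple-generating map) generates inside $\mathbb{E}b_\beta\mathbb{E}$ a submodule on which right multiplication by $J(\mathbb{E})$ acts trivially, placing a nonzero element of the form $\varphi b_\beta \psi$ into $\mathrm{Soc}({}_{\mathbb{E}}\mathbb{E})\cap\mathrm{Soc}(\mathbb{E}_{\mathbb{E}})$ that is simultaneously in $[\mathbb{E},\mathbb{E}]$ (because $b_\beta\mathbb{E}b_\beta=0$ forces $\varphi b_\beta\psi = \varphi b_\beta\psi - b_\beta\psi\varphi b_\beta \cdot(\ldots)$, i.e. it is a commutator modulo zero); then any symmetrizing form vanishes on it yet must be nonzero on a socle generator by nondegeneracy — contradiction. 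The main obstacle is exactly this last bookkeeping: producing a single nonzero element that is provably both a commutator and a generator of a one-dimensional two-sided ideal, and I expect the clean way is to pass to the basic algebra (Morita invariance of symmetry, already noted in Section 2) so that $M_1, M_2$ may be assumed indecomposable non-isomorphic, whereupon $b_\beta$ itself, times an idempotent on each side, does the job.
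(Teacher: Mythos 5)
There is a genuine gap: your proposal never closes. You correctly set up the matrix description of $\mathbb{E}$ and the element $b_\beta = \beta E_{12}$, but each of your attempted finishes is abandoned (the nondegeneracy pairing, the ideal products that collapse to $0$ "giving nothing", the socle element that is "a commutator modulo zero"), and you end by conceding that "the main obstacle is exactly this last bookkeeping." Passing to indecomposable summands via Morita/corner arguments does not resolve this either: even for indecomposable $M_1,M_2$ you still need an actual computation showing that some nonzero one-sided ideal is killed by every trace form, and several of your intermediate entry computations are wrong (e.g.\ $u\,b_\beta$ has $(i,2)$-entry $u_{i1}\beta$, not $(i,1)$-entry $u_{i2}\beta$). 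Note also that you read the hypothesis as $\beta\circ\gamma=0$; the paper uses $\gamma\circ\beta=0$ for all $\gamma\in\mathrm{Hom}_\Lambda(M_1,M_2)$, i.e.\ $\beta^*$ is precomposition. Either reading yields a true lemma (by a left/right-symmetric argument), but you should fix one and use it consistently.

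The missing idea is elementary and is the whole content of the paper's proof: if $\lambda:\mathbb{E}\to k$ vanishes on commutators, then it vanishes on \emph{every} off-diagonal component, because for $i\neq j$ and $\alpha\in\mathrm{Hom}_\Lambda(M_j,M_i)$ one has $\alpha E_{ij}=(\alpha E_{ij})E_{jj}-E_{jj}(\alpha E_{ij})$, a commutator with the idempotent $E_{jj}$. Now compute the left ideal generated by $b_\beta$: for arbitrary $\sum\alpha_{ij}E_{ij}\in\mathbb{E}$,
$$\Bigl(\sum\alpha_{ij}E_{ij}\Bigr)(\beta E_{12})=\alpha_{11}\beta\,E_{12}+\alpha_{21}\beta\,E_{22},$$
and $\lambda$ kills the first term because it is off-diagonal, while the second term is literally zero since $\alpha_{21}\in\mathrm{Hom}_\Lambda(M_1,M_2)$ and $\alpha_{21}\beta=0$ by hypothesis. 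Hence the nonzero left ideal $\mathbb{E}b_\beta$ lies in $\ker\lambda$ for every trace form $\lambda$, so no symmetrizing form exists (its kernel may contain no nonzero left ideal). No socle analysis, no nondegeneracy pairing, and no reduction to the basic algebra are needed.
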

\begin{proof}
Recall that we can write

\begin{equation}\label{decomp}
\E \simeq
\begin{pmatrix}
E_{\Lambda}(M_1) & \text{Hom}_{\Lambda}(M_2,M_1) \\
\text{Hom}_{\Lambda}(M_1,M_2) & E_{\Lambda}(M_2)
\end{pmatrix}
\end{equation}

Let $\lambda : \mathbb{E} \rightarrow k$ be a linear map that vanishes on commutators. So if $i \not= j$ and $\alpha \in \text{Hom}_{\Lambda}(M_j,M_i)$ then

$$0 = \lambda((\alpha E_{ij})E_{jj} - E_{jj}(\alpha E_{ij})) = \lambda(\alpha E_{ij})$$

We claim that $\mathbb{E}(\beta E_{12}) \subseteq \text{Ker}(\lambda)$. For this, note that

$$\lambda\left(\left(\sum \alpha_{ij} E_{ij}\right)(\beta E_{12})\right) = \lambda(\alpha_{11}\beta E_{12}) + \lambda(\alpha_{21}\beta E_{22}) = 0$$

since $\alpha_{21}\beta = 0$ for all $\alpha \in \text{Hom}_{\Lambda}(M_1,M_2)$. Therefore, $\lambda$ is not a symmetrizing form and $\E$ is not symmetric.
\end{proof}

We now parameterize the Hom space between two uniserial modules.

\begin{prop}\label{uniserial}Suppose $M_1, M_2 \in {_{\Lambda}\text{mod}}$ are uniserial and let

$$\mathcal{S}(M_1,M_2) = \{ 1 \leq l \leq \min\{ \ell\ell(M_1), \ell\ell(M_2)\} : M_1/J^l(M_1) \simeq \text{Soc}^l(M_2) \}$$

For each $l \in \mathcal{S}(M_1,M_2)$ fix an isomorphism $M_1/J^l(M_1) \simeq \text{Soc}^l(M_2)$ and write $\alpha_l$ for the composition

$$M_1 \twoheadrightarrow M_1/J^l(M_1) \simeq \text{Soc}^l(M_2) \hookrightarrow M_2$$

Then $\{ \alpha_l : l \in \mathcal{S}(M_1,M_2) \}$ is a basis of $\text{Hom}_{\Lambda}(M_1,M_2)$.\end{prop}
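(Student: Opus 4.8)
The plan is to establish two things: first, that each $\alpha_l$ is a well-defined nonzero homomorphism, and second, that the $\alpha_l$ are linearly independent and span $\mathrm{Hom}_\Lambda(M_1,M_2)$. For the first point, the only subtlety in the definition of $\alpha_l$ is that the middle isomorphism $M_1/J^l(M_1)\simeq \mathrm{Soc}^l(M_2)$ actually exists — but that is exactly the condition $l\in\mathcal{S}(M_1,M_2)$. Since $M_1$ is uniserial, $J^l(M_1)$ is the unique submodule of $M_1$ with quotient of Loewy length $l$, so $M_1/J^l(M_1)$ is uniserial of Loewy length $l$; likewise $\mathrm{Soc}^l(M_2)$ is the unique submodule of $M_2$ of Loewy length $l$. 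Composing the natural surjection, the fixed isomorphism, and the natural inclusion gives $\alpha_l\neq 0$ with image $\mathrm{Soc}^l(M_2)$ and kernel $J^l(M_1)$.

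For linear independence, suppose $\sum_{l\in\mathcal{S}} c_l\alpha_l = 0$ with not all $c_l$ zero, and let $l_0$ be the smallest index with $c_{l_0}\neq 0$. The key observation is that $\mathrm{im}(\alpha_l) = \mathrm{Soc}^l(M_2)$, and these socle layers are totally ordered by inclusion since $M_2$ is uniserial. Restricting $\sum c_l\alpha_l$ to an element of $M_1$ not in $J^{l_0}(M_1)$ but lying in $J^{l_0-1}(M_1)$ (which exists since $M_1$ is uniserial), every $\alpha_l$ with $l>l_0$ — equivalently $J^l(M_1)\subsetneq J^{l_0}(M_1)$, hmm, one must be careful here — the cleanest approach is to look at the image: $\mathrm{im}(\sum_{l\ge l_0} c_l\alpha_l)\subseteq \mathrm{Soc}^{l_{\max}}(M_2)$ where $l_{\max}$ is the largest index present, but a better filtration argument is to evaluate on $M_1$ itself. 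Concretely, $\alpha_l$ factors through $M_1/J^l(M_1)$, so for the term $l_0$, restricting attention to $J^{l_0-1}(M_1)/J^{l_0}(M_1)$: $\alpha_{l_0}$ is nonzero on this layer while $\alpha_l$ for $l > l_0$ vanishes on $J^{l_0}(M_1)\supseteq\cdots$ — wait, $\alpha_l$ vanishes on $J^l(M_1)$ and $J^l(M_1)\subseteq J^{l_0}(M_1)$ for $l\ge l_0$, so $\alpha_l$ need not vanish on $J^{l_0-1}(M_1)$. The correct move: $\alpha_l$ for $l<l_0$ are absent; for $l\ge l_0$, all $\alpha_l$ annihilate $J^{l_{\max}}(M_1)$, and modulo $J^{l_0}(M_1)$ only $\alpha_{l_0}$ survives (since $l > l_0 \Rightarrow$ wait no). I will instead argue by image: $\sum_{l\ge l_0}c_l\alpha_l$ has image contained in $\sum\mathrm{Soc}^l(M_2) = \mathrm{Soc}^{l_1}(M_2)$ where $l_1 = \max\{l : c_l\neq 0\}$, and one checks the induced map $M_1/J^{l_1}(M_1)\to\mathrm{Soc}^{l_1}(M_2)$ between uniserial modules of the same Loewy length is either an isomorphism or has image in $\mathrm{Soc}^{l_1-1}(M_2)$; a downward induction on $l_1$ then isolates each coefficient. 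This filtration bookkeeping is the one genuinely fiddly part of the argument.

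For spanning, take an arbitrary $0\neq\beta\in\mathrm{Hom}_\Lambda(M_1,M_2)$. Since $M_1$ is uniserial, $\ker\beta = J^l(M_1)$ for some $l\ge 1$, so $\mathrm{im}(\beta)\simeq M_1/J^l(M_1)$ is uniserial of Loewy length $l$; since $M_2$ is uniserial, $\mathrm{im}(\beta)$ is the unique submodule of $M_2$ of Loewy length $l$, namely $\mathrm{Soc}^l(M_2)$. Hence $M_1/J^l(M_1)\simeq\mathrm{Soc}^l(M_2)$, so $l\in\mathcal{S}(M_1,M_2)$, and $\beta$ and $\alpha_l$ are two maps $M_1\to M_2$ with the same kernel $J^l(M_1)$ and the same image $\mathrm{Soc}^l(M_2)$. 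Thus $\beta$ and $\alpha_l$ differ by an automorphism $\theta$ of $\mathrm{Soc}^l(M_2)$; but $\mathrm{Soc}^l(M_2)\simeq M_1/J^l(M_1)$ is uniserial, hence has local endomorphism ring, so $\theta = c\cdot\mathrm{id} + (\text{nilpotent})$ — and here I want to conclude $\beta = c\alpha_l$ outright. The honest statement is that $E_\Lambda(\mathrm{Soc}^l(M_2))$ is local with residue field $k$, so $\theta$ acts as a scalar on the top $\mathrm{Soc}^l/J(\mathrm{Soc}^l)$; iterating down the (uniserial!) composition series, or rather comparing $\beta$ and $\alpha_l$ layer by layer and invoking that both are surjections onto $\mathrm{Soc}^l(M_2)$, forces $\beta - c\alpha_l$ to have strictly smaller image, i.e. $\ker(\beta-c\alpha_l)\supsetneq J^l(M_1)$, so $\beta - c\alpha_l = \sum_{l'<l, l'\in\mathcal{S}}(\cdots)$ by induction on $l$. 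The main obstacle throughout is precisely this inductive descent through the Loewy layers and confirming that "two maps with equal kernel and equal image differ by a scalar up to lower-order terms" — routine once set up, but it is where the uniseriality of \emph{both} modules (not just one) is essential.
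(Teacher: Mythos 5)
Your argument is correct, and it reaches the same essential punchline as the paper (subtract a suitable scalar multiple of one $\alpha_l$, using that the relevant endomorphism ring is local with residue field $k$, and induct), but it is organized differently. The paper proceeds by a double induction on the Loewy lengths with a case split --- $M_1$ or $M_2$ simple, $M_2$ not a quotient of $M_1$ (so every map lands in $J(M_2)$), $M_2$ a proper quotient of $M_1$, and finally the diagonal case $M_1=M_2$, where it peels off the identity-degree map $\alpha_L$. You instead classify an arbitrary nonzero $\beta$ directly by its kernel: uniseriality of $M_1$ forces $\ker\beta=J^l(M_1)$, uniseriality of $M_2$ forces $\operatorname{im}\beta=\mathrm{Soc}^l(M_2)$, whence $l\in\mathcal{S}(M_1,M_2)$ and $\beta-c\alpha_l$ has strictly larger kernel for a suitable scalar $c$; a single downward induction on $l$ finishes spanning. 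This is arguably cleaner: it avoids the case analysis entirely and makes transparent exactly why $\mathcal{S}(M_1,M_2)$ indexes a basis (each $l$ records the possible kernels of nonzero maps). It does use uniseriality of \emph{both} modules at the same moment, as you note, whereas the paper's inductions exploit them one at a time. Your linear-independence paragraph is written with several visible false starts, but the argument you settle on --- compose $\sum c_l\alpha_l$ with the projection onto $\mathrm{Soc}^{l_1}(M_2)/\mathrm{Soc}^{l_1-1}(M_2)$ for $l_1$ the largest index with $c_{l_1}\neq 0$, so that only the $l_1$ term survives and is surjective onto a simple module --- is sound; in a final write-up you should state it in that one sentence rather than leaving the exploratory detours in place. (Independence also follows at once from the observation in your spanning step that the $\alpha_l$ have pairwise distinct, totally ordered images.)
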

\begin{proof}
Since $k$ is algebraically closed, the result is immediate if $M_1$ or $M_2$ is simple. If $M_2$ is not a quotient of $M_1$ then every homomorphism $M_1 \rightarrow M_2$ has image contained in $J(M_2)$, and so the result follows by induction on $\ell\ell(M_2)$. So suppose $M_2$ is a quotient of $M_1$ and write $M_2 = M_1/\text{Soc}^l(M_1)$. Every homomorphism factors through $M_1 \twoheadrightarrow M_1/\text{Soc}^l(M_1)$, and so the result follows by induction on $\ell\ell(M_1)$ if $l > 0$. So we may suppose $M_1 = M_2 = M$. Note that if $L = \ell\ell(M)$ then

$$\mathcal{S}(M,M) = \{L\} \coprod \mathcal{S}(M,J(M))$$

There is $m \in M$ for which $M/J(M) = {\Lambda}(m+J(M))$. So if $\psi \in \mathbb{E}_{\Lambda}(M)$, then there is $\lambda_1 \in k$ for which $\psi(m) = \lambda_1 m + m'$ for some $m' \in J(M)$. Also, $\alpha_L(m) = \lambda_2 m + m''$ for some $0 \not= \lambda_2 \in k$ and $m'' \in J(M)$. Hence, $\psi - (\lambda_1/\lambda_2)\alpha_L$ is a homomorphism $M \rightarrow J(M)$. By induction, $\psi-(\lambda_1/\lambda_2)\alpha_L = \sum c_l\alpha_l$ for some $c_l \in k$. This shows that $\{ \alpha_l \}_{l \in \mathcal{S}(M,M)}$ spans $E_{\Lambda}(M)$. For linear independence, it is clear that $\alpha_L$ is not a linear combination of $\{ \alpha_l \}_{l \in \mathcal{S}(M,J(M))}$ and the linear independence of $\{ \alpha_l \}_{l \in \mathcal{S}(M,J(M))}$ follows by induction. The proof is complete.
\end{proof}

This parametrization leads to a criterion for $E_{\Lambda}(M)$ to be symmetric when $M$ is uniserial.

\begin{thm}\label{uniEnd}Suppose $M$ is a uniserial $\Lambda$-module. Then $\mathbb{E} = E_{\Lambda}(M)$ is quasi-Frobenius iff $\mathcal{S}(M,M) = \{ \ell\ell(M)\}$ or if there is $d_1 \in \mathbb{N}^+$ with

$$\mathcal{S}(M,M) = \{ \ell\ell(M) - id_1 : 0 \leq i \leq \lceil \ell\ell(M)/d_1-1 \rceil \}$$

Moreover, $\mathbb{E}$ is symmetric whenever $\mathbb{E}$ is quasi-Frobenius.
\end{thm}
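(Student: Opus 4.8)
The plan is to compute $\mathbb{E}$ explicitly using the basis $\{\alpha_l : l \in \mathcal{S}(M,M)\}$ from Proposition \ref{uniserial}, understand the multiplication table of this basis, and then either exhibit a symmetrizing form or locate a socle of dimension $\geq 2$. Let $L = \ell\ell(M)$. The first observation is that composition of the maps $\alpha_l$ behaves like a ``truncated multiplication'': since $\alpha_l$ has image $\text{Soc}^l(M)$ and kernel $J^{L-l}(M)$ (when we identify the isomorphism classes correctly), one expects $\alpha_l \circ \alpha_{l'}$ to be a scalar multiple of $\alpha_{l+l'-L}$ when $l+l' - L \in \mathcal{S}(M,M)$ and to vanish otherwise (in particular whenever $l + l' \leq L$). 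I would first verify this carefully: the point is that $\alpha_{l'}$ sends $M$ onto $\text{Soc}^{l'}(M)$, and then $\alpha_l$ restricted to $\text{Soc}^{l'}(M) = J^{L-l'}(M)$ kills everything unless $L - l' < l$, i.e. $l + l' > L$, in which case the image is $\text{Soc}^{l'}(M) \cap \text{Soc}^{l}(\text{image}) = \text{Soc}^{l+l'-L}(M)$ up to the fixed isomorphisms. The subtlety — and the place the hypothesis on $\mathcal{S}(M,M)$ enters — is that the composite isomorphism must again be one of the \emph{chosen} isomorphisms defining $\alpha_{l+l'-L}$, or at least a nonzero scalar multiple; this follows because $\text{Hom}_\Lambda$ between the relevant uniserial quotient and sub is at most one-dimensional, so any two nonzero maps differ by a scalar.

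With the multiplication understood, the structure of $\mathbb{E}$ becomes transparent. If $\mathcal{S}(M,M) = \{L\}$ then $\mathbb{E} = k\alpha_L = k$ is trivially symmetric. Otherwise, writing $\mathcal{S}(M,M) = \{L, L-d_1, L-2d_1, \ldots\}$ under the stated arithmetic-progression hypothesis, I would set $\theta = \alpha_{L-d_1}$ (the ``generator''). Then from the composition rule, $\theta^j = \alpha_{L - jd_1} \cdot (\text{nonzero scalar})$ for each $j$ with $L - jd_1 \in \mathcal{S}(M,M)$, and $\theta^{j} = 0$ once $L - jd_1 \leq 0$ — more precisely $\theta^{c} = 0$ where $c = \lceil L/d_1 \rceil$ (one past the last element of $\mathcal{S}$). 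Hence $\mathbb{E}$ is generated as a $k$-algebra by the single element $\theta$, so $\mathbb{E}$ is monogenic and therefore symmetric by Lemma \ref{Monogenic}; in fact $\mathbb{E} \simeq k[T]/(T^c)$ with $c = |\mathcal{S}(M,M)|$.

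The converse — that $\mathbb{E}$ fails to be quasi-Frobenius when $\mathcal{S}(M,M)$ is not of one of these two forms — is where the real work lies, and I expect it to be the main obstacle. The strategy: $\mathbb{E}$ is local (since $M$ is indecomposable, being uniserial), so by the remarks in section 2 it is quasi-Frobenius iff $\dim\text{Soc}({_{\mathbb{E}}\mathbb{E}}) = 1$. The radical $J(\mathbb{E})$ is spanned by $\{\alpha_l : l \in \mathcal{S}(M,M),\ l < L\}$. I would show that if $\mathcal{S}(M,M) = \{L = l_0 > l_1 > \cdots > l_r\}$ is not an arithmetic progression of the prescribed shape (including not reaching down far enough), then one can find \emph{two} linearly independent elements annihilated by $J(\mathbb{E})$ on the left. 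Concretely, $\alpha_{l}$ is killed by left-multiplication by $\alpha_{l'}$ (for $l' < L$) exactly when $l + l' \leq L$ or $l + l' - L \notin \mathcal{S}(M,M)$; so $\alpha_{l_r}$ (the bottom one) is in the socle provided $l_r + l_1 \leq L$, i.e. $l_r \leq L - l_1 = d_1$. If the progression is ``genuine'' this forces $l_r = d_1 - $ something and leaves the socle one-dimensional; but if, say, the gaps are unequal, or if $\mathcal{S}$ contains a ``stray'' element not in the progression generated by $d_1 := L - l_1$, then either that stray element or an intermediate $\alpha_{l_j}$ also lands in the socle, giving $\dim\text{Soc} \geq 2$. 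Making the bookkeeping of these cases exhaustive and clean is the delicate part — one essentially has to argue that $\mathcal{S}(M,M)$ being closed under the partial operation $l, l' \mapsto l + l' - L$ (which quasi-Frobeniusness of the resulting algebra forces, via the socle condition) together with $L \in \mathcal{S}(M,M)$ pins it down to exactly the two listed possibilities. I would organize this as: (i) show quasi-Frobenius $\Rightarrow$ $\theta := \alpha_{l_1}$ generates $\mathbb{E}$ (else the subalgebra it generates has strictly smaller dimension and one produces an extra socle element outside it), and (ii) once $\mathbb{E} = k[\theta]$ is monogenic with $\theta^c = 0$, read off that $\mathcal{S}(M,M) = \{L, L - d_1, \ldots\}$ for $d_1 = L - l_1$ by matching dimensions $c = \dim\mathbb{E} = |\mathcal{S}(M,M)|$ against $\lceil L/d_1\rceil$.
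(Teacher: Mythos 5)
Your overall strategy coincides with the paper's: take the basis $\{\alpha_l : l \in \mathcal{S}(M,M)\}$ from Proposition \ref{uniserial}, work out the composition rule $\alpha_l\alpha_{l'} \in k^\times\alpha_{l+l'-L}$ for $l+l'>L$ and $\alpha_l\alpha_{l'}=0$ otherwise, conclude that in the arithmetic-progression case $\mathbb{E}$ is monogenic and hence symmetric by Lemma \ref{Monogenic}, and otherwise exhibit a socle of dimension at least two. Your composition rule is correct (modulo the slip that $\mathrm{Ker}(\alpha_l)=J^l(M)$, not $J^{L-l}(M)$; your subsequent computation uses the correct kernel), and with it the sufficiency direction is complete and matches the paper's.

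The gap is in the necessity direction, which you leave as a plan rather than a proof: you never actually produce the second socle element, and your proposed step (i) --- that quasi-Frobeniusness forces $\alpha_{l_1}$ to generate $\mathbb{E}$ --- is essentially the statement to be proved, with the justification deferred to "one produces an extra socle element outside it." Two observations, both already implicit in your composition rule, close this. First, the case analysis you fear is not needed: since $\alpha_{l_1}^i$ is a nonzero map with image $J^{id_1}(M)$ whenever $id_1 < L$ (where $d_1 = L - l_1$), the full progression $\{L-id_1 : 0 \leq i \leq \lceil L/d_1-1\rceil\}$ is \emph{automatically} contained in $\mathcal{S}(M,M)$, so a "truncated" progression cannot occur and the only bad case is the existence of a stray $j\in\mathcal{S}(M,M)$ with $d_1 \nmid (L-j)$. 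Second, in that case the two required socle elements can be written down explicitly: $\alpha_{l_1}^{i_2}$ with $i_2$ maximal subject to $i_2d_1 < L$, and $\alpha_{l_1}^{i_1}\alpha_j$ with $i_1$ maximal subject to $(L-j)+i_1d_1 < L$. Each is nonzero; each is killed on the left by every non-automorphism $\beta$, since $\beta(M)\subseteq J^{d_1}(M)$ by maximality of $l_1$ and hence $\beta$ pushes the image past $J^{L}(M)=0$; and they are linearly independent because their images $J^{i_2d_1}(M)$ and $J^{(L-j)+i_1d_1}(M)$ are distinct, precisely because $d_1\nmid(L-j)$. Supplying these two points turns your outline into a complete proof essentially identical to the paper's.
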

\begin{proof}
Let $d$ be the largest integer less than $\ell\ell(M)$ contained in $\mathcal{S}(M,M)$; if no such $d$ exists, then $\mathbb{E} = k\text{Id}_M$ by Proposition \ref{uniserial} and hence $\mathbb{E}$ is symmetric. Write $F_1 = \alpha_d$ and $d' = \ell\ell(M)-d$ so that $F_1(M) = J^{d'}(M)$. Since $(F_1)^i$ maps $M$ onto $J^{d'i}(M)$ and $\ell\ell(J^{d'i}(M)) = \ell\ell(M)-d'i$ provided $d'i \leq \ell\ell(M)$, we see that $\ell\ell(M)-d'i \in \mathcal{S}(M,M)$ for $0 \leq d'i < \ell\ell(M)$. In particular, $\{ \ell\ell(M) - id' : 0 \leq i \leq i^* \} \subseteq \mathcal{S}(M,M)$ where $i^* = \lceil \ell\ell(M)/d'-1 \rceil$ and we may assume $\alpha_{\ell\ell(M)-id'} = (F_1)^i$.

If $\mathcal{S}(M,M)$ has the form given in the statement for some $d_1$, then $d_1 = d'$ by maximality of $d$, and so $\mathbb{E}$ has basis $\{ \text{Id}_M \} \coprod \{ (F_1)^i : 1 \leq i \leq i^* \}$ so that $\mathbb{E} \simeq k[T]/(T^{i^*+1})$ is symmetric. Suppose then that there is $j \in \mathcal{S}(M,M)$ with $d' \nmid (\ell\ell(M)-j)$, and let $\alpha_j : M \rightarrow M$ be a map with image $\text{Soc}^j(M) = J^{j'}(M)$ where $j' = \ell\ell(M)-j$. Note that $(F_1)^i\alpha_j$ maps $M$ onto $J^{j'+id'}$ and so $\ell\ell(M)-(j'+id') \in \mathcal{S}(M,M)$ provided $j'+id' < \ell\ell(M)$. Choose $i_1 \in \mathbb{N}$ maximal subject to this condition. If $\beta$ is a non-automorphism of $M$, then $\beta(M) \subseteq J^{d'}(M)$ by choice of $d$. So $(F_1)^{i_1}\alpha_j \not= 0$ and $\beta(F_1)^{i_1}\alpha_j$ maps $M$ into $J^{j'+(i_1+1)d'}(M) = 0$ since $j'+(i_1+1)d' \geq \ell\ell(M)$ by choice of $i_1$. In particular, $(F_1)^{i_1}\alpha_j$ is a nonzero element of $\text{Soc}({_{\mathbb{E}}\mathbb{E}})$. A similar argument shows that $(F_1)^{i_2} \in \text{Soc}({_{\mathbb{E}}\mathbb{E}})$ for $i_2$ chosen maximal subject to $d'i < \ell\ell(M)$. As $j'+i_1d' \not= i_2d'$ for any $i_1, i_2$, we conclude that $\dim \text{Soc}({_{\mathbb{E}}\mathbb{E}}) \geq 2$ and hence $\mathbb{E}$ is not quasi-Frobenius. The result is established.
\end{proof}

Note that it is not true that $E_{\Lambda}(M)$ is symmetric for every uniserial module $M$ - take $M$ to be the string module $M_{aba}$. This fact is true, however, when we suppose that $\Lambda$ is Nakayama. Recall from \cite{Elements} that $\Lambda$ is Nakayama provided each block $\Gamma$ of the basic algebra associated with $\Lambda$ has an ext quiver that has one of the following two forms:

\begin{figure}[here]
\centerline{
\xymatrix{\circ & \circ \ar[l] & \circ \ar[l] & \cdots \ar[l] & \circ \ar[l] & \circ \ar[l]}}
\caption{$\Gamma$ has a simple projective}
\label{fig1}
\end{figure}

\begin{figure}[here]
\centerline{
\def\alphanum{\ifcase\xypolynode\or \circ \or \ddots \or \circ \or \circ \or \circ \or \circ \or \circ \or \circ \or \circ \or \circ \or \circ \or \circ \or \circ\fi}
\xy/r3pc/:
{\xypolygon9{~><{}
~*{\alphanum}
~>>{}}}
\endxy}
\caption{$\Gamma$ has no simple projective}
\label{fig1}
\end{figure}

There is no harm in assuming that $\Lambda$ is a basic and connected Nakayama algebra, and so we make this assumption throughout the rest of this section. If $\text{Irr}(\Lambda) = \{S_1,\ldots,S_n\}$ with $P_i$ a projective cover of $S_i$, then every indecomposable $\Lambda$-module $M$ can be uniquely written as $M = P_i/J^j(P_i)$ for some $1 \leq i \leq n$ and $1 \leq j \leq \ell\ell(P_i)$. In particular, $M$ is specified by $\text{Top}(M)$ and $\ell\ell(M)$. Knowing this information and $n$, we may write down the composition factors of $M$. For example, if $n = 3$, $\text{Top}(M) = S_2$, and $\ell\ell(M) = 7$ then $M$ has composition factors $S_2, S_3, S_1, S_2, S_3, S_1, S_2$. Note that the quiver of $\Lambda$ must be of the second form in this case, and observe the periodicity that is displayed by the composition factors. Moreover, an isomorphism of the form $M_1/J^l(M_1) \simeq \text{Soc}^l(M_2)$ arises for $1 \leq l \leq \min\{ \ell\ell(M_1), \ell\ell(M_2) \}$ precisely when $\text{Top}(M_1) \simeq \text{Top}(\text{Soc}^l(M_2))$. Therefore, $\mathcal{S}(M_1,M_2)$ can be readily determined from knowledge of $M_1$ and $M_2$, and Proposition \ref{uniserial} provides a 'combinatorial' parametrization of $\text{Hom}_{\Lambda}(M_1,M_2)$. In fact, Theorem \ref{uniEnd} specializes to the following.

\begin{Cor}\label{NakCor}Suppose $\Lambda$ is a basic and connected Nakayama algebra with $M \in \text{Ind}(\Lambda)$. Then $\E = E_{\Lambda}(M)$ is symmetric.\end{Cor}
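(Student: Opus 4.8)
The plan is to derive the corollary directly from Theorem~\ref{uniEnd} together with the combinatorial description of $\mathcal{S}(M_1,M_2)$ recorded above. Since $\Lambda$ is a basic connected Nakayama algebra, its unique block has an ext quiver of one of the two displayed forms, and every indecomposable $\Lambda$-module is uniserial. In particular $M$ is uniserial, so by Theorem~\ref{uniEnd} it suffices to check that $\mathcal{S}(M,M)$ equals $\{\ell\ell(M)\}$ or has the form $\{\ell\ell(M) - id_1 : 0 \le i \le \lceil \ell\ell(M)/d_1 - 1\rceil\}$ for some $d_1 \in \mathbb{N}^+$; the ``moreover'' clause of Theorem~\ref{uniEnd} then upgrades quasi-Frobenius to symmetric.

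Write $L = \ell\ell(M)$, let $S = \text{Top}(M)$, and list the composition factors of $M$ from the top in positions $1, \dots, L$, so the factor in position $1$ is $S$. As noted above, these factors are governed by the quiver: in the case where $\Gamma$ has a simple projective they are the $L \le n$ pairwise non-isomorphic simples obtained by following the arrows from $S$, while in the cyclic case (with $n = |\text{Irr}(\Lambda)|$ vertices) they are periodic, the factor in position $k$ being isomorphic to the factor in position $k'$ exactly when $k \equiv k' \pmod n$. Since $\text{Soc}^l(M)$ is the unique submodule of Loewy length $l$, it occupies positions $L-l+1, \dots, L$, so $\text{Top}(\text{Soc}^l(M))$ is the factor in position $L-l+1$. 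By the remark preceding the corollary, $l \in \mathcal{S}(M,M)$ iff $\text{Top}(M) \simeq \text{Top}(\text{Soc}^l(M))$, i.e. iff the factor in position $1$ is isomorphic to the factor in position $L-l+1$.

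Now I would split into the two cases. In the case with a simple projective, distinctness of the composition factors forces $L-l+1 = 1$, so $\mathcal{S}(M,M) = \{L\}$ and indeed $E_\Lambda(M) = k\,\text{Id}_M$ by Proposition~\ref{uniserial}. In the cyclic case, the periodicity gives $l \in \mathcal{S}(M,M)$ iff $L-l+1 \equiv 1 \pmod n$, i.e. iff $n \mid (L-l)$; hence $\mathcal{S}(M,M) = \{\, l : 1 \le l \le L,\ n \mid (L-l)\,\} = \{\, L - in : 0 \le i \le \lceil L/n - 1\rceil\,\}$, after the elementary check that $\lfloor (L-1)/n\rfloor = \lceil L/n\rceil - 1$. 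In both cases $\mathcal{S}(M,M)$ has exactly the shape required by Theorem~\ref{uniEnd}, which therefore yields that $E_\Lambda(M)$ is symmetric.

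The whole argument is bookkeeping with composition series, and no genuine obstacle arises. The only point needing a little care is the cyclic case, where one must track the wrap-around modulo $n$ and confirm that the resulting arithmetic progression matches precisely the index range $0 \le i \le \lceil \ell\ell(M)/d_1 - 1\rceil$ that appears in the statement of Theorem~\ref{uniEnd}.
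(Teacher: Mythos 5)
Your proposal is correct and follows essentially the same route as the paper: both identify $\mathcal{S}(M,M)$ from the combinatorics of the composition series (the paper phrases the index range via the multiplicity $m$ of $\text{Top}(M)$ in $M$, which equals your $\lceil \ell\ell(M)/n\rceil$) and then invoke Theorem~\ref{uniEnd} with $d_1 = n$. Your explicit case split between the linear and cyclic quivers is just a slightly more detailed writeup of the paper's one-line computation.
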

\begin{proof}
Write $S = \text{Top}(M)$, $n = |\text{Irr}(\Lambda)|$, and suppose $S$ has multiplicity $m \geq 1$ in $M$. By the previous paragraph, we see that $\mathcal{S}(M,M) = \{ \ell\ell(M)-in : 0 \leq i \leq m-1\}$ and so $\E$ is symmetric by Theorem \ref{uniEnd}.
\end{proof}

It is convenient to introduce some notation: for $M \in \text{Ind}(\Lambda)$ and $S \in \text{Irr}(\Lambda)$ we write $m(M,S)$ for the multiplicity of $S$ in $M$, that is, the number of times $S$ occurs as a composition factor of $M$. We now classify the $\Lambda$-modules with symmetric endomorphism algebra.

\begin{thm}\label{Nakayama}Suppose $\Lambda$ is a basic and connected Nakayama algebra. Let $M \in {_{\Lambda}\text{mod}}$ with non-isomorphic indecomposable direct summands $M_1, \ldots, M_r$. Then $E_{\Lambda}(M)$ is symmetric iff for $i \not= j$ the modules $M_i$ and $M_j$ satisfy one of the following:

\begin{enumerate}

\item[a.] $\text{Top}(M_2)$ has multiplicity zero in $\text{Soc}^l(M_1)$ and $\text{Top}(M_1)$ has multiplicity zero in $\text{Soc}^l(M_2)$ where $l = \min\{ \ell\ell(M_1),\ell\ell(M_2) \}$.

\item[b.] For some $m \geq 1$, $\text{Top}(M_1)$ has multiplicity $m+1$ in $M_1$ and $m$ in $M_2$, and $\text{Top}(M_2)$ has multiplicity $m+1$ in $M_2$ and $m$ in $M_1$.

\end{enumerate}

In particular, $\text{Top}(M_i) \not\simeq \text{Top}(M_j)$ if $i \not= j$.
\end{thm}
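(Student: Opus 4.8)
The strategy is to reduce the question to pairs of indecomposable summands, exploiting that each diagonal block $E_{\Lambda}(M_i)$ is already settled by Corollary \ref{NakCor} (it is isomorphic to $k[T]/(T^{m_i})$ with $m_i = m(M_i,\text{Top}(M_i))$), so that only the interaction between distinct summands is at issue. First I would make the routine reductions: replacing $M$ by $\bigoplus_{i=1}^{r}M_i$ changes $E_{\Lambda}(M)$ only up to Morita equivalence, and symmetry is a Morita invariant, so we may assume $M = \bigoplus_{i=1}^{r}M_i$ with the $M_i$ pairwise non-isomorphic (hence uniserial, so Proposition \ref{uniserial} applies); and for the ``only if'' direction we may further assume $r = 2$, since if $e \in E_{\Lambda}(M)$ is the idempotent projecting $M$ onto $M_i \oplus M_j$ then $eE_{\Lambda}(M)e \cong E_{\Lambda}(M_i \oplus M_j)$ is symmetric whenever $E_{\Lambda}(M)$ is. I would also record two translations. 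Using the periodicity of the composition factors of a uniserial module over a basic connected Nakayama algebra together with Proposition \ref{uniserial}, condition (a) is equivalent to $\text{Hom}_{\Lambda}(M_i,M_j) = 0 = \text{Hom}_{\Lambda}(M_j,M_i)$; and if $\text{Top}(M_i) \simeq \text{Top}(M_j)$ then neither (a) nor (b) can hold (for (a), the shorter of $M_i,M_j$ is a quotient of the other and so contains the common top in every nonzero term of its socle series; for (b), the two requirements on the multiplicity of the common top read $m = m+1$). This last remark yields the ``in particular'' clause once the main equivalence is proved.

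For the ``only if'' direction with $M = M_1 \oplus M_2$: if exactly one of $\text{Hom}_{\Lambda}(M_1,M_2)$, $\text{Hom}_{\Lambda}(M_2,M_1)$ vanishes, then Lemma \ref{notSym}, applied to any nonzero element of the other space, shows $E_{\Lambda}(M)$ is not symmetric; so either both vanish, whence (a) holds, or both are nonzero and I must deduce (b). In the block form (\ref{decomp}), a symmetrizing form $\lambda$ vanishes on the off-diagonal blocks, its restrictions $\lambda_1,\lambda_2$ to $E_{\Lambda}(M_1),E_{\Lambda}(M_2)$ are symmetrizing forms there, the trace identity forces $\lambda_1(\alpha\beta) = \lambda_2(\beta\alpha)$ for all $\alpha \in \text{Hom}_{\Lambda}(M_2,M_1)$ and $\beta \in \text{Hom}_{\Lambda}(M_1,M_2)$, and the pairing $(\alpha,\beta) \mapsto \lambda_1(\alpha\beta)$ must be nondegenerate between the two Hom spaces. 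Parametrising both Hom spaces by $\mathcal{S}(M_1,M_2)$ and $\mathcal{S}(M_2,M_1)$ via Proposition \ref{uniserial} and computing the composites $\alpha\beta$, $\beta\alpha$ through Loewy layers (they are again the basis maps of Proposition \ref{uniserial} up to a scalar whenever nonzero), these requirements pin down $|\mathcal{S}(M_1,M_2)| = |\mathcal{S}(M_2,M_1)|$ together with the precise way the elements of the two sets must interleave; since, by periodicity, $\mathcal{S}(M_1,M_2)$ and $\mathcal{S}(M_2,M_1)$ depend only on $\text{Top}(M_1)$, $\text{Top}(M_2)$, $\ell\ell(M_1)$, $\ell\ell(M_2)$ and $n = |\text{Irr}(\Lambda)|$, these constraints translate exactly into the balanced multiplicity pattern of (b).

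For the ``if'' direction, suppose every pair $(M_i,M_j)$ satisfies (a) or (b). A pair satisfying (b) with parameter $m$ has both Loewy lengths in the single interval $\{mn+1,\ldots,(m+1)n\}$, so the parameter is constant along chains of (b)-related summands; hence the $M_i$ split into groups, within which any two summands satisfy (b) (so have distinct tops and Loewy lengths in one such interval) and between which all Hom spaces vanish by the reformulation of (a). Thus $E_{\Lambda}(M)$ is a direct product over the groups, and since a product of symmetric algebras is symmetric precisely when each factor is, it suffices to treat a single group $C$. For such $C$ I would take $\lambda$ to be the symmetrizing form $T^{m_i-1} \mapsto 1$ (other basis elements $\mapsto 0$) on each diagonal block $E_{\Lambda}(M_i) \cong k[T]/(T^{m_i})$ and $0$ on the off-diagonal blocks, and verify, via Proposition \ref{uniserial} and explicit composition of ``top'' Hom-basis maps in the manner of the proof of Theorem \ref{BandModules}, that $\lambda$ kills all commutators (equivalently, that $\lambda_i(\alpha\beta) = \lambda_j(\beta\alpha)$ whenever $\alpha \in \text{Hom}_{\Lambda}(M_j,M_i)$ and $\beta \in \text{Hom}_{\Lambda}(M_i,M_j)$) and that $\text{Ker}(\lambda)$ contains no nonzero one-sided ideal; the multiplicity pattern (b) is precisely what makes the composites of top Hom-basis maps land on the respective socles simultaneously. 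Equivalently, one may read off from Proposition \ref{uniserial} that $E_{\Lambda}(\bigoplus_{i \in C} M_i)$ is a self-injective Nakayama algebra with trivial Nakayama permutation, hence symmetric.

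I expect the step ``both Hom spaces nonzero $\Rightarrow$ (b)'' to be the main obstacle: extracting the exact balanced multiplicity condition from nondegeneracy of the off-diagonal pairing and the trace identity requires carefully tracking how the maps $\alpha_l$ of Proposition \ref{uniserial} compose and when the resulting index stays in the admissible range, and then converting the ensuing congruences modulo $n$ into the clean statement about the multiplicities of $\text{Top}(M_1)$ and $\text{Top}(M_2)$ in $M_1$ and $M_2$.
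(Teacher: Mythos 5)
Your overall architecture matches the paper's: reduce to pairs via idempotents and Morita equivalence, translate (a) into the vanishing of both Hom spaces, use Lemma \ref{notSym} to force both Hom spaces nonzero when (a) fails, group the summands into a direct product of blocks for sufficiency, and build the symmetrizing form as the product of the forms $\lambda_i$ on the diagonal blocks extended by zero off the diagonal. The preliminary reductions and the grouping argument in the ``if'' direction are sound (and the observation that all Loewy lengths in a (b)-group lie in $\{mn+1,\ldots,(m+1)n\}$, so that condition (a) cannot hold between members of a group, is exactly the point the paper also makes).

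However, the decisive step of the ``only if'' direction --- that both Hom spaces nonzero forces the balanced multiplicity pattern of (b), and in particular excludes $\text{Top}(M_1)\simeq\text{Top}(M_2)$ --- is asserted (``these constraints translate exactly into the balanced multiplicity pattern of (b)'') rather than proved, and you yourself flag it as the unresolved obstacle. This is where the mathematical content of the theorem lives. The paper closes it concretely: it introduces the largest submodule $N_1$ of $M_1$ with top $S_{n_2}$ and the smallest such submodule $P_1$ (and symmetrically $N_2,P_2$), observes the a priori inequalities $m(M_1,S_{n_2})\leq m(M_1,S_{n_1})\leq m(M_1,S_{n_2})+1$, and shows that if $m(M_2,S_{n_1})\geq m(M_1,S_{n_1})$ then every $\beta:M_2\to M_1$ kills $P_2$, so $\beta\alpha_1=0$ for all $\beta$, contradicting Lemma \ref{notSym}; a parallel count disposes of the case $\text{Top}(M_1)\simeq\text{Top}(M_2)$. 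Your nondegenerate-pairing formulation is equivalent in spirit to Lemma \ref{notSym}, but equality of $|\mathcal{S}(M_1,M_2)|$ and $|\mathcal{S}(M_2,M_1)|$ alone does not yield (b); you must actually track which composites $\alpha_l\alpha_{l'}$ survive, which is what the $N_i,P_i$ bookkeeping accomplishes. Similarly, in the ``if'' direction the verification that $\lambda$ kills commutators and that $\text{Ker}(\lambda)$ contains no one-sided ideal is deferred; the paper does this by fixing generators $\beta_i:M_i\to M_{i-1}$ with prescribed images and checking identities such as $\beta_{ij}\beta_{ji}=\alpha_i$. Your proposed shortcut --- that $E_\Lambda(\bigoplus_{i\in C}M_i)$ is a self-injective Nakayama algebra with trivial Nakayama permutation, hence symmetric --- would need two nontrivial verifications of its own (that the endomorphism algebra is serial, and that weakly symmetric implies symmetric for such algebras), neither of which is supplied.
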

\begin{proof}
Assume that $E_{\Lambda}(M)$ is symmetric so that $\E := E_{\Lambda}(M_1 \oplus M_2)$ is symmetric. It suffices to show a. or b. holds for $M_1$ and $M_2$. We may assume $\ell\ell(M_2) \geq \ell\ell(M_1)$. Notice that a. is equivalent to $\text{Hom}_{\Lambda}(M_1,M_2) = \text{Hom}_{\Lambda}(M_2,M_1) = 0$. So suppose a. does not hold and let $S_{n_i} = \text{Top}(M_i)$. We must have $\text{Hom}_{\Lambda}(M_1,M_2)$ and $\text{Hom}_{\Lambda}(M_2,M_1)$ are both nonzero by Lemma \ref{notSym}. So we may define $N_1$ as the largest proper submodule of $M_1$ with $\text{Top}(N_1) \simeq S_{n_2}$, and $P_1$ as the smallest submodule of $M_1$ with $\text{Top}(P_1) \simeq S_{n_2}$; similarly we may define $N_2$ and $P_2$ with $\text{Top}(N_2) \simeq \text{Top}(P_2) \simeq S_{n_1}$. Also, write $\alpha_1$ and $\alpha_2$ for the maps $M_1 \twoheadrightarrow P_2$ and $M_2 \twoheadrightarrow P_1$, respectively. We aim to show that $m = m(M_1,S_{n_1}) - 1$ satisfies the conditions of b. This breaks into two cases. Note first that

\begin{equation}\label{mult}
\begin{split}
m(M_1,S_{n_2}) &\leq m(M_1,S_{n_1}) \leq m(M_1,S_{n_2})+1 \\
m(M_2,S_{n_1}) &\leq m(M_2,S_{n_2}) \leq m(M_2,S_{n_1})+1
\end{split}
\end{equation}

Assume that $S_{n_1} \not\simeq S_{n_2}$. If $\beta \in \text{Hom}_{\Lambda}(M_2,M_1)$ then $\text{Im}(\beta) \subseteq N_1 \subset M_1$ and so $m(\text{Im}(\beta),S_{n_1}) \leq m(N_1,S_{n_1}) < m(M_1,S_{n_1})$. In particular, if $m(M_2,S_{n_1}) \geq m(M_1,S_{n_1})$ then $P_2 \subseteq \text{Ker}(\beta)$ and so $\beta\alpha_1 = 0$ for all $\beta \in \text{Hom}_{\Lambda}(M_2,M_1)$, rendering a contradiction by Lemma \ref{notSym}. Thus, $m(M_2,S_{n_1}) < m(M_1,S_{n_1})$ and similarly $m(M_1,S_{n_2}) < m(M_2,S_{n_2})$. Since $\ell\ell(M_2) \geq \ell\ell(M_1)$, it follows that $m(M_2,S_{n_2}) \geq m(M_1,S_{n_1})$ and hence $m(M_2,S_{n_1}) \geq m$ by (\ref{mult}). Thus, $m(M_2,S_{n_1}) = m$, $m(M_2,S_{n_2}) = m+1$, and similarly $m(M_1,S_{n_2}) = m$.

On the other hand, if we assume $S_{n_1} = S_{n_2} = S$ then $\ell\ell(M_1) < \ell\ell(M_2)$ since $M_1 \not\simeq M_2$. In particular, $\text{Im}(\beta) \subseteq N_2$ whenever $\beta : M_1 \rightarrow M_2$. If $m(N_2,S) < m(M_1,S)$ then $\beta\alpha_2 = 0$ for all $\beta : M_1 \rightarrow M_2$, rendering a contradiction. On the other hand, if $m(N_2,S) \geq m(M_1,S)$ then $\beta\alpha_1 = 0$ for all $\beta : M_2 \rightarrow M_1$ since $m(N_2,S) < m(M_2,S)$, rendering the final contradiction. So $S_{n_1} = S_{n_2}$ never occurs. In fact, it is clear that $\text{Top}(M_1) \not\simeq \text{Top}(M_2)$ in both cases a. and b., thus completing the necessity of a. and b.

Suppose then that $M = \oplus_{i=1}^r M_i^{\oplus m_i}$ with $M_i$ and $M_j$ non-isomorphic indecomposable modules satisfying a. or b. whenever $i \not= j$. Also recall that $n = |\text{Irr}(\Lambda)|$. Set $\E = E_{\Lambda}(M)$ and note that if $e_{ij}$ denotes projection onto the $j$th copy of $M_i$ in $M$, then $1 = \sum_{ij} e_{ij}$ is the Pierce decomposition of $1$ in $\E$, so that $e\E e$ is a basic algebra for $e = \sum e_{i1}$. Since $e\E e \simeq E_{\Lambda}(\oplus M_i)$ and symmetry is preserved under Morita equivalence, we may assume $m_i = 1$ for all $i$. Let $T$ consist of all $i$ such that $(M_i,M_j)$ satisfies b. for some $j \not= i$. If $i_1, i_2 \in T$ then $M_{i_1}/J^n(M_{i_1})$ has composition factors $S_u, S_{u+1}, \ldots, S_{u+n-1}$ and $\text{Soc}^n(M_{i_2})$ has composition factors $S_v, S_{v+1}, \ldots, S_{v+n-1}$ for some $u,v$, where $S_j$ is considered for $j$ modulo $n$, and hence $\text{Hom}_{\Lambda}(M_{i_1},M_{i_2}) \not= 0$. That is, $(M_{i_1},M_{i_2})$ satisfies b. Hence, upon relabeling the $\{ M_i \}$ if necessary, there is $1 \leq s \leq r$ such that $\text{Hom}_{\Lambda}(M_i,M_j) = \text{Hom}_{\Lambda}(M_j,M_i) = 0$ for $1 \leq i < s$ and $j \not= i$, and $(M_i,M_j)$ satisfy b. for $i,j \geq s$ and $j \not= i$. In particular

$$\E \simeq \prod_{i=1}^{s-1} E_{\Lambda}(M_i) \times E_{\Lambda}(M_s \oplus \cdots \oplus M_r)$$

and so we may assume $s = 1$. Moreover, if $S_{n_i} = \text{Top}(M_i)$ then we may assume $n_1 < n_2 < \cdots < n_r$. It is clear that there is $m \geq 1$ such that $S_{n_i}$ has multiplicity $m+1$ in $M_i$ for all $i$. By the proof to Corollary \ref{NakCor} we know $E_{\Lambda}(M_i) \simeq k[T]/(T^{m+1})$. More precisely, using cyclic notation for $\{M_i\}$ modulo $r$, we let $N_{ij}$ be the largest submodule of $M_i$ with $\text{Top}(N_{ij}) \simeq S_{n_j}$, $\beta_i : M_i \rightarrow M_{i-1}$ with image $N_{i-1,i}$ for $1 \leq i \leq r$, and define $\alpha_i : M_i \rightarrow M_i$ by $\alpha_i = \beta_{i+1} \cdots \beta_{i-1}\beta_i$. Then $E_{\Lambda}(M_i)$ has basis $\{1_{M_i},\alpha_i,\alpha_i^2,\ldots,\alpha_i^m\}$ and there is a symmetrizing form $\lambda_i : E_{\Lambda}(M_i) \rightarrow k$ given by $\lambda_i(\alpha_i^j) = \delta_{jm}$. Furthermore, if $i \not= j$ and $s > 0$ is the smallest number for which $i-s-1 \equiv_r j$, then we let $\beta_{ji} : M_i \rightarrow M_j$ by $\beta_{ji} = \beta_{i-s} \cdots \beta_{i-1}\beta_i$. Then by the description in b. we see that $\text{Hom}_{\Lambda}(M_i,M_j)$ has basis $\{ \alpha_j^u\beta_{ji} : 0 \leq u \leq m-1\}$ where we interpret $\alpha_j^0 = 1_{M_j}$.

Now define $\lambda : \E \rightarrow k$ by extending linearly the rule $\lambda(\gamma_{ij}E_{ij}) = \delta_{ij}\lambda_i(\gamma_{ij})$ for $\gamma_{ij} : M_j \rightarrow M_i$. Suppose that $\zeta = \sum \gamma_{ij} E_{ij}$ satisfies $\zeta\E \subseteq \text{Ker}(\lambda)$. If $\beta : M_i \rightarrow M_j$ then $0 = \lambda(\zeta \cdot \beta E_{ji} \cdot \alpha E_{ii}) = \lambda_i(\gamma_{ij}\beta\alpha)$ for all $\alpha \in E_{\Lambda}(M_i)$, and hence $\gamma_{ij}\beta = 0$. If $i = j$ then $\gamma_{ij} = 0$ since we may take $\beta = 1_{M_i}$, and so we assume $i \not= j$, in which case $\gamma_{ij}\beta_{ji} = 0$ so that $N_{ji} \subseteq \text{Ker}(\gamma_{ij})$. In particular, if $\gamma_{ij} \not= 0$ then $m(\text{Im}(\gamma_{ij}),\text{Top}(M_j)) = 1$ and $m(\text{Im}(\gamma_{ij}),\text{Top}(M_i)) = 0$. Since $\text{Im}(\gamma_{ij})$ is a submodule of $M_i$, this contradicts the description provided in b. So $\zeta = 0$ and it remains to check that $\lambda$ vanishes on commutators. If $\gamma : M_i \rightarrow M_j$ and $\gamma' : M_k \rightarrow M_l$ then

$$[\gamma E_{ij}, \gamma' E_{kl}] = \delta_{jk} \gamma\gamma' E_{il} - \delta_{li} \gamma'\gamma E_{kj}$$

and $\lambda$ maps this element to zero, except possibly when $i = l$ and $j = k$, in which case it is sent to $\lambda_i(\gamma\gamma') - \lambda_j(\gamma'\gamma)$. If $i = j$ then $\lambda_i(\gamma\gamma') = \lambda_j(\gamma'\gamma)$ since $E_{\Lambda}(M_i)$ is commutative, and if $i \not= j$ then we may suppose $\gamma = \alpha_j^u\beta_{ji}$ and $\gamma' = \alpha_i^v\beta_{ij}$. From the definitions we see $\alpha_i\beta_{ij} = \beta_{ij}\alpha_j$ and so $\gamma\gamma' = \beta_{ji}\beta_{ij}\alpha_j^{u+v}$ and $\gamma'\gamma = \beta_{ij}\beta_{ji}\alpha_i^{u+v}$. We may also check that $\beta_{ij}\beta_{ji} = \alpha_i$ and $\beta_{ji}\beta_{ij} = \alpha_j$. It follows from the definition of $\lambda_i$ and $\lambda_j$ that $\lambda(\gamma\gamma') = \lambda(\gamma'\gamma)$, and hence $\lambda$ is a symmetrizing form for $\E$.

\end{proof}

For the details in the second half of the previous proof, it is constructive to consider an example:

$$\xymatrix{\underline{M_3} & & \underline{M_1} & & \underline{M_2} & & \underline{M_3} \\ S_4 \ar[dddrr]^{\beta_{13}} \ar@/_2pc/[dddd]_{\alpha_3} && S_1 \ar[dll]_{\beta_1} && S_3 \ar[ddll]_{\beta_2} && S_4 \ar[dll]_{\beta_3}\\ S_1 && S_2 && S_4 && S_1 \\ S_2 && S_3 && S_1 && S_2 \\ S_3 && S_4 && S_2 && S_3 \\ S_4 && S_1 && S_3 && S_4 \\ && S_2 && &&}$$

Here the uniserial $M_i$ are specified by their composition series, which is possible since $\Lambda$ is Nakayama, and the $\beta_i$ are specified by their images, though $\beta_i$ are not uniquely determined. Also, $\alpha_3$ and $\beta_{13}$, for instance, are defined as appropriate compositions of $\{ \beta_1, \beta_2, \beta_3 \}$. It is because we have carefully chosen bases for $E_{\Lambda}(M_i)$ and $\text{Hom}_{\Lambda}(M_i,M_j)$ that we can assert $\lambda$ vanishes on commutators.

\section{Local Algebras and Future Research}

In this final section we complete the analysis started in section 3 and tie this to the results obtained for local Nakayama algebras. More generally, for a local algebra $\Lambda$, we show that the problem of determining when $E_{\Lambda}(M)$ is symmetric or quasi-Frobenius reduces to the consideration of the indecomposable $\Lambda$-modules. After this, we provide some further results that are of interest in their own right and point towards possible future research. To begin, we prove the following.

\begin{prop}\label{TopSoc}Suppose $\Lambda$ is a local algebra and $M \in \text{Ind}(\Lambda)$ is such that $\E = E_{\Lambda}(M)$ is quasi-Frobenius. Then $M/J(M) \simeq \text{Soc}(M) \simeq k$.\end{prop}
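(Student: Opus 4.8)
The plan is to reduce everything to a dimension count on the socle of $\E$. Since $\Lambda$ is local and $k$ is algebraically closed, the unique simple $\Lambda$-module is $k$, so $M/J(M) \cong k^{d}$ and $\text{Soc}(M) \cong k^{s}$ for some integers $d, s \geq 1$, and it suffices to prove $d = s = 1$. Because $M$ is indecomposable of finite length, $\E$ is a local algebra (Fitting's lemma), so the hypothesis that $\E$ is quasi-Frobenius forces $\dim_k \text{Soc}({_{\E}\E}) = \dim_k \text{Soc}({\E}_{\E}) = 1$ by the facts recalled in Section~2. The whole proof then comes down to producing a $(ds)$-dimensional subspace of $\text{Soc}({\E}_{\E})$.

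First I would record the elementary observation that every $g \in J(\E)$ satisfies $g(M) \subseteq J(M)$: otherwise $g(M) + J(M) = M$, so $g(M) = M$ by Nakayama's lemma, so $g$ is surjective and hence an automorphism of $M$ by finite-dimensionality, contradicting $g \in J(\E)$. Next, consider the linear map
$$\Phi : \text{Hom}_{\Lambda}(M/J(M), \text{Soc}(M)) \longrightarrow \E, \qquad \phi \longmapsto \iota \circ \phi \circ \pi,$$
where $\pi : M \twoheadrightarrow M/J(M)$ and $\iota : \text{Soc}(M) \hookrightarrow M$ are the canonical maps. Since $\pi$ is surjective and $\iota$ injective, $\Phi$ is injective, so $\text{Im}(\Phi)$ has dimension $\dim_k \text{Hom}_{\Lambda}(M/J(M), \text{Soc}(M)) = ds$, both modules being semisimple with every composition factor isomorphic to $k$. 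Every $f \in \text{Im}(\Phi)$ kills $J(M)$ by construction, so for any $g \in J(\E)$ the composite $f \circ g$ is zero, by the previous observation that $g(M) \subseteq J(M)$. Thus $\text{Im}(\Phi) \subseteq \text{Soc}({\E}_{\E})$, whence $ds \leq \dim_k \text{Soc}({\E}_{\E}) = 1$, forcing $d = s = 1$ and therefore $M/J(M) \simeq \text{Soc}(M) \simeq k$.

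I do not expect a genuine obstacle here; the only point requiring care is the left/right bookkeeping. One must use endomorphisms that factor \emph{through the top and then into the socle}, so that they are annihilated on the \emph{right} by non-units and hence land in the right socle $\text{Soc}({\E}_{\E})$ (equivalently one could phrase the argument on the left using $\text{Soc}({_{\E}\E})$, since the two coincide for a quasi-Frobenius algebra). Beyond that, every step is routine.
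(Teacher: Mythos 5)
Your reduction to a dimension count on a one-dimensional socle is the right idea, but your ``elementary observation'' that every $g \in J(\E)$ satisfies $g(M) \subseteq J(M)$ is false in general, and the error sits exactly at the step you call routine: from $g(M) \not\subseteq J(M)$ you cannot conclude $g(M) + J(M) = M$ unless $M/J(M)$ is already known to be simple --- which is half of what you are trying to prove. When $M/J(M) \simeq k^d$ with $d>1$, a nilpotent endomorphism of $M$ may induce a nonzero (nilpotent) endomorphism of $M/J(M)$. Concretely, take $\Lambda = k\lr{x,y}/(x,y)^2$ and let $M$ have basis $e_1,e_2,f_1,f_2$ with $xe_1 = f_1$, $ye_1 = 0$, $xe_2 = f_2$, $ye_2 = f_1$, and $f_1,f_2$ annihilated by $x$ and $y$. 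This $M$ is indecomposable, and the endomorphism $g$ with $g(e_2)=e_1$, $g(f_2)=f_1$, $g(e_1)=g(f_1)=0$ lies in $J(\E)$ but has $g(M) \not\subseteq J(M)$; one checks directly that $\text{Im}(\Phi) \not\subseteq \text{Soc}({\E}_{\E})$ here. The dual fact needed for your left-handed variant (that every $g \in J(\E)$ kills $\text{Soc}(M)$) fails in the same example, since $g(f_2)=f_1\neq 0$. So the containment on which your entire count rests is not established.

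The paper avoids this by proving the two isomorphisms in sequence rather than simultaneously. It first takes a nonzero $\alpha \in \text{Soc}({_{\E}\E})$ and a simple submodule $R \subseteq \text{Im}(\alpha)$; since $\beta\alpha = 0$ for every $\beta \in J(\E)$, this particular $R$ lies in $\text{Ker}(\beta)$ for every such $\beta$, so each composite $M \twoheadrightarrow M/N \rightarrow R \hookrightarrow M$ (for $N$ any maximal submodule) lies in $\text{Soc}({_{\E}\E})$, and one-dimensionality of that socle forces $J(M)$ to be the unique maximal submodule, i.e.\ $M/J(M)\simeq k$. Only after this does your observation become true (a non-surjective endomorphism of $M$ has image contained in the now-unique maximal submodule), and the right-socle argument you propose then runs correctly to give $\text{Soc}(M)\simeq k$. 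Your proof is repairable, but you must first establish $M/J(M)\simeq k$ by an argument that does not presuppose the observation.
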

\begin{proof}
For $N$ a maximal submodule of $M$ and $Q$ a simple submodule of $M$, write $\alpha_{N,Q}$ for the composition

$$M \twoheadrightarrow M/N \stackrel{\sim}{\rightarrow} Q \hookrightarrow M$$

where $\sim$ denotes an arbitrary automorphism. Let $0 \not= \alpha \in \text{Soc}({_{\mathbb{E}}\mathbb{E}})$ so that $\beta\alpha = 0$ and hence $\text{Im}(\alpha) \subseteq \text{Ker}(\beta)$ whenever $\beta$ is a non-automorphism of $M$. Let $R$ be a simple submodule of $\text{Im}(\alpha)$ and note that $\beta\alpha_{N,R} = 0$ for $\beta \in J(\mathbb{E})$ and $N$ a maximal submodule. This means $\alpha_{N,R} \in \text{Soc}({_{\mathbb{E}}\mathbb{E}})$. Therefore, since $\E$ is quasi-Frobenius, $M$ has a unique maximal submodule, equal to $J(M)$, and hence $M/J(M) \simeq k$. Moreover, for $Q$ a simple submodule of $\text{Soc}(M)$, we see $\alpha_{J(M),Q}\beta = 0$ whenever $\beta \in J(\mathbb{E})$. So $\alpha_{J(M),Q} \in \text{Soc}(\mathbb{E}_{\mathbb{E}})$ and hence the quasi-Frobenius condition implies that $\text{Soc}(M) \simeq k$, as required.
\end{proof}

We can now deliver the promised result.

\begin{thm}\label{localAlgebra}Suppose $\Lambda$ is a local algebra and $M \in {_{\Lambda}\text{mod}}$. Then $E_{\Lambda}(M)$ is quasi-Frobenius precisely when $M$ is isotypic, say $M = N^{\oplus e}$ for some $e \geq 1$ and $N \in \text{Ind}(\Lambda)$, with $E_{\Lambda}(N)$ quasi-Frobenius.\end{thm}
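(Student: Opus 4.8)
The plan is to prove both directions by reducing to the structure theory of $\E = E_\Lambda(M)$ as a matrix algebra over $\mathrm{Hom}$-spaces, using Proposition \ref{TopSoc} as the key input for the local case. Write $M = \bigoplus_{i=1}^r N_i^{\oplus e_i}$ with the $N_i$ pairwise non-isomorphic indecomposables and $e_i \geq 1$. For the ``if'' direction, suppose $M = N^{\oplus e}$ is isotypic with $E_\Lambda(N)$ quasi-Frobenius. Then $\E \simeq M_e(E_\Lambda(N))$, and since being quasi-Frobenius is Morita invariant (as recalled in section 2), $\E$ is quasi-Frobenius. This direction is immediate and requires no new work.

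For the ``only if'' direction, assume $\E$ is quasi-Frobenius; I must show $r = 1$ and that $E_\Lambda(N_1)$ is quasi-Frobenius. First I would handle $r = 1$: here $\E \simeq M_{e_1}(E_\Lambda(N_1))$, so $\E$ quasi-Frobenius forces $E_\Lambda(N_1)$ quasi-Frobenius by Morita invariance again, completing the argument in that case. The substance is therefore in showing $r = 1$. I would argue by contradiction: suppose $r \geq 2$ and derive that $\dim \mathrm{Soc}({_\E\E}) \geq 2$ for some indecomposable projective, contradicting the quasi-Frobenius condition. Pick two non-isomorphic summands $N_1, N_2$; it suffices to treat $\E' = E_\Lambda(N_1 \oplus N_2)$ since $e\E e$ is again quasi-Frobenius for a suitable idempotent $e$. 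Now since $\Lambda$ is local, each $N_i$ has a \emph{unique} simple top and a \emph{unique} simple bottom, both isomorphic to $k$ (there is only one simple $\Lambda$-module). Thus any nonzero $\beta : N_1 \to N_2$ has image containing $\mathrm{Soc}(N_2) \simeq k$; composing with a surjection $N_1 \twoheadrightarrow N_1/J(N_1) \simeq k$ realized as a map $\alpha : N_2 \to N_1$ through $\mathrm{Soc}(N_1)$, I would build an element of $\E'$ killed on the left by $J(\E')$. More precisely, I would use the matrix decomposition (\ref{decomp}) to locate, inside the indecomposable projective $\E' E_{11}$, two linearly independent elements of the socle: one supported in $E_\Lambda(N_1)$ coming from $\mathrm{Soc}(E_\Lambda(N_1))$ (which is nonzero and, by Proposition \ref{TopSoc} applied to $N_1$, behaves like the socle of a local quasi-Frobenius-type piece), and one supported in the off-diagonal $\mathrm{Hom}_\Lambda(N_2,N_1)$-slot coming from a map with image $\mathrm{Soc}(N_1)$. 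The point is that a map $N_2 \to N_1$ landing in $\mathrm{Soc}(N_1) \simeq k$ is annihilated on the left by every non-automorphism of $N_1$ and by every non-automorphism ``crossing'' to $N_2$, hence lies in $\mathrm{Soc}({_{\E'}\E'})$, and it is visibly independent from the diagonal contribution. If $\mathrm{Hom}_\Lambda(N_2,N_1) = 0$ I would instead run the symmetric argument with $\mathrm{Hom}_\Lambda(N_1,N_2)$, and if both $\mathrm{Hom}$-spaces vanish then $\E' \simeq E_\Lambda(N_1) \times E_\Lambda(N_2)$ is a nontrivial product, which is never local and whose left socle has dimension $\geq 2$ once each factor is nonzero — again contradicting quasi-Frobenius (a quasi-Frobenius algebra that is a product still has socle dimension equal to the number of indecomposable projectives, but here the real obstruction is that $\E$ local forces $\E$ to have a \emph{single} indecomposable projective, whereas $r \geq 2$ gives at least two; more carefully, $\E$ need not be local, so I will phrase the contradiction purely in terms of $\dim\mathrm{Soc} \geq 2$).

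The main obstacle I anticipate is the bookkeeping in the off-diagonal socle computation: one must verify that the chosen element of $\mathrm{Hom}_\Lambda(N_2,N_1)$ with image $\mathrm{Soc}(N_1)$ really is annihilated by \emph{all} of $J(\E')$ — i.e. by non-automorphisms of $N_1$, by $\mathrm{Hom}_\Lambda(N_1,N_2)$, and it should not itself be expressible via the diagonal socle element — and that this uses \emph{only} the local hypothesis (through uniqueness of top and socle of each $N_i$) together with Proposition \ref{TopSoc}, and not any finer structure. Once that lemma-style computation is in hand, the contradiction $\dim\mathrm{Soc}({_{\E'}\E'}) \geq 2$ is immediate, forcing $r = 1$, and then Morita invariance finishes the theorem. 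I would also remark that this is precisely the step that fails to dualize cleanly without Proposition \ref{TopSoc}, which is why that proposition was isolated first.
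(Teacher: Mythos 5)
Your overall skeleton --- the Morita-invariance argument for the ``if'' direction, the reduction to two non-isomorphic indecomposable summands $N_1,N_2$ via an idempotent, the appeal to Proposition \ref{TopSoc}, and the matrix decomposition (\ref{decomp}) --- matches the paper. The gap is in the final contradiction. You propose to locate two linearly independent socle elements inside a \emph{single} indecomposable projective of $\E' = E_{\Lambda}(N_1\oplus N_2)$, but this is not possible in general, so the contradiction $\dim\mathrm{Soc}(P)\geq 2$ cannot be reached. Concretely, take $\Lambda = k[x]/(x^2)$, $N_1 = k$, $N_2 = \Lambda$. Then $\E'$ is five-dimensional; writing $\iota : N_1\hookrightarrow N_2$ for the inclusion onto $\mathrm{Soc}(N_2)$ and $\pi : N_2\twoheadrightarrow N_1$ for the projection, one checks that $P_1 = \E'(1_{N_1}E_{11})$ has basis $\{1_{N_1}E_{11},\,\iota E_{21}\}$ with $\mathrm{Soc}(P_1) = k\,\iota E_{21}$, and $P_2$ has basis $\{\pi E_{12},\,1_{N_2}E_{22},\,\iota\pi E_{22}\}$ with $\mathrm{Soc}(P_2) = k\,\iota\pi E_{22}$. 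Both socles are simple, yet $\E'$ is not quasi-Frobenius. Two further symptoms of the same confusion: the $\mathrm{Hom}_{\Lambda}(N_2,N_1)$-slot sits in position $(1,2)$ of (\ref{decomp}), hence in the second column $P_2$, not in $P_1$, so your two candidate elements do not lie in the same projective; and the element of position $(1,1)$ coming from $\mathrm{Soc}(E_{\Lambda}(N_1))$ need not be killed on the left by $\mathrm{Hom}_{\Lambda}(N_1,N_2)E_{21}$ (in the example $\iota E_{21}\cdot 1_{N_1}E_{11} = \iota E_{21}\neq 0$).

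The obstruction the paper actually exploits is the Nakayama permutation, not socle dimension. Since $N_1\not\simeq N_2$, at least one embedding, say $N_1\hookrightarrow N_2$, fails to exist; then every non-automorphism of $N_1$ and every map $N_1\to N_2$ kills $\mathrm{Soc}(N_1)\simeq k$, so the maps $\alpha_{11}:N_1\to N_1$ and $\alpha_{12}:N_2\to N_1$ with image $\mathrm{Soc}(N_1)$ yield socle elements $\alpha_{11}E_{11}\in\mathrm{Soc}(P_1)$ and $\alpha_{12}E_{12}\in\mathrm{Soc}(P_2)$. Both are annihilated by $1_{N_2}E_{22}$, whence $\mathrm{Soc}(P_1)\simeq\mathrm{Soc}(P_2)\simeq\mathrm{Top}(P_1)$; this contradicts the fact that non-isomorphic indecomposable projectives of a quasi-Frobenius algebra have non-isomorphic (simple) socles. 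You should replace your $\dim\mathrm{Soc}\geq 2$ step with this argument. Finally, the case $\mathrm{Hom}_{\Lambda}(N_1,N_2)=\mathrm{Hom}_{\Lambda}(N_2,N_1)=0$ never occurs for $\Lambda$ local (the composite $N_1\twoheadrightarrow\mathrm{Top}(N_1)\simeq k\simeq\mathrm{Soc}(N_2)\hookrightarrow N_2$ is always nonzero), and in any case $\dim\mathrm{Soc}({_{\E'}\E'})\geq 2$ is not by itself a contradiction once $\E'$ has two non-isomorphic simple modules.
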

\begin{proof}
If $M = N^{\oplus e}$ with $E_{\Lambda}(M)$ quasi-Frobenius, then $E_{\Lambda}(M) \simeq M_n(E_{\Lambda}(N))$ is quasi-Frobenius. So assume $M \in {_{\Lambda}\text{mod}}$ with $E_{\Lambda}(M)$ quasi-Frobenius and suppose $M$ has two non-isomorphic indecomposable direct summands $M_1$ and $M_2$. Then $\E = E_{\Lambda}(M_1 \oplus M_2) = eE_{\Lambda}(M)e$ is quasi-Frobenius, as are $E_{\Lambda}(M_1)$ and $E_{\Lambda}(M_2)$. By Lemma \ref{TopSoc} we know $\text{Soc}(M_i) \simeq \text{Top}(M_i) \simeq k$. We can write

\begin{equation*}
\E \simeq
\begin{pmatrix}
E_{\Lambda}(M_1) & \text{Hom}_{\Lambda}(M_2,M_1) \\
\text{Hom}_{\Lambda}(M_1,M_2) & E_{\Lambda}(M_2)
\end{pmatrix}
\end{equation*}

By \cite{Drozd} we also have

\begin{equation*}
J(\E) \simeq
\begin{pmatrix}
J(E_{\Lambda}(M_1)) & \text{Hom}_{\Lambda}(M_2,M_1) \\
\text{Hom}_{\Lambda}(M_1,M_2) & J(E_{\Lambda}(M_2))
\end{pmatrix}
\end{equation*}

In terms of this matrix description, we know by the Pierce decomposition that ${_{\mathbb{E}}\mathbb{E}} = P_1 \oplus P_2$ where each $P_i = \mathbb{E}(1_{M_i}E_{ii})$ is an indecomposable projective. Since $\text{Soc}(M_i) \simeq k$, there are maps $\alpha_{ij} : M_j \rightarrow M_i$, uniquely determined up to a nonzero multiple of $k$, with image $\text{Soc}(M_i)$. We denote by $\pi$ the Nakayama permutation of $\{1,2\}$ which satisfies $\text{Soc}(P_i) \simeq \text{Top}(P_{\pi(i)})$. Also, note that $\{1_{M_1} E_{11}, 1_{M_2} E_{22} \}$ modulo $J(\mathbb{E})$ forms a basis for $\mathbb{E}/J(\mathbb{E})$, and $1_{M_i} E_{ii}$ acts on $\text{Top}(P_j)$ as the identity if $j = i$ and as zero if $j \not= i$.

Now suppose there is no inclusion $M_1 \hookrightarrow M_2$. Then every non-automorphism of $M_1$ and every homomorphism $M_1 \rightarrow M_2$ vanishes on $\text{Soc}(M_1)$. In particular, by our description of $J(\E)$ we see $s_1 = \alpha_{11} E_{11} \in \text{Soc}(P_1)$ since $M_1$ is non-simple, and also $s_2 = \alpha_{12} E_{12} \in \text{Soc}(P_2)$. Since $\text{Soc}(P_i)$ is simple, we have $\text{Soc}(P_i) = ks_i$, and since $1_{M_2} E_{22}$ annihilates $s_1$ and $s_2$, we obtain the contradiction that $\text{Soc}(P_1) \simeq \text{Soc}(P_2) \simeq \text{Top}(P_1)$. A similar contradiction arises when there is no inclusion $M_2 \hookrightarrow M_1$. Since there are inclusions $M_1 \hookrightarrow M_2$ and $M_2 \hookrightarrow M_1$ only when $M_1 \simeq M_2$, the result is established.
\end{proof}

Moreover, if $n \geq 1$, $\Gamma = M_n(\Lambda)$, $M \in {_{\Lambda}\text{mod}}$, and $M^{\oplus n}$ is the corresponding $\Gamma$-module, then $E_{\Gamma}(M^{\oplus n}) \simeq M_n(E_{\Lambda}(M))$ so that $E_{\Gamma}(M^{\oplus n})$ is symmetric or quasi-Frobenius precisely when $E_{\Lambda}(M)$ is the same. In other words, the determination of whether $E_{\Lambda}(M)$ is symmetric or quasi-Frobenius is invariant under Morita equivalences. Thus, we see that Theorem \ref{localAlgebra} holds more generally for primary algebras (i.e. $\Lambda/J(\Lambda)$ is simple). For a non-primary algebra, this result will never hold since $E_{\Lambda}(S_1 \oplus S_2) \simeq k \times k$ is symmetric whenever $S_1$ and $S_2$ are non-isomorphic irreducible $\Lambda$-modules. Proposition \ref{TopSoc} also provides a new way of thinking about indecomposable $\Lambda$-modules with symmetric endomorphism algebras, for $\Lambda$ local.

\begin{Cor}\label{interpretation}Suppose $\Lambda$ is a local algebra and $M \in \text{Ind}(\Lambda)$ has $E_{\Lambda}(M)$ symmetric. Then $M \simeq \Lambda/I$ for some left ideal $I$ where $\Gamma := \{ x \in \Lambda : Ix \subseteq I \}$ is such that $\Gamma/I$ is symmetric.\end{Cor}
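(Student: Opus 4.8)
The plan is to use Proposition \ref{TopSoc} to make $M$ cyclic, realize $E_\Lambda(M)$ as the opposite of the algebra $\Gamma/I$, and then transfer symmetry across the opposite-algebra operation.

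First I would observe that since $E_\Lambda(M)$ is symmetric it is in particular quasi-Frobenius, so Proposition \ref{TopSoc} applies and gives $M/J(M) \simeq k$. Hence $M$ is generated by any single element $m \in M \setminus J(M)$: we have $\Lambda m + J(M) = M$, so $\Lambda m = M$ by Nakayama's lemma. Putting $I = \mathrm{Ann}_\Lambda(m)$, a left ideal of $\Lambda$, the map $\lambda \mapsto \lambda m$ induces an isomorphism $\Lambda/I \simeq M$ of left $\Lambda$-modules, which is the required presentation.

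Next I would compute $E_\Lambda(\Lambda/I)$ directly. Write $\bar x = x + I$. A $\Lambda$-homomorphism of $\Lambda/I$ is determined by the image $\bar x$ of $\bar 1$, and the assignment $\bar\lambda \mapsto \overline{\lambda x}$ is a well-defined $\Lambda$-linear map precisely when $I x \subseteq I$, i.e.\ exactly when $x \in \Gamma$; moreover two elements of $\Gamma$ yield the same endomorphism iff they are congruent modulo $I$. Before this, one records the bookkeeping that $\Gamma$ is a subalgebra of $\Lambda$ containing $1$ and containing $I$ as a two-sided ideal (immediate from $\Lambda I = I$ and the defining condition of $\Gamma$), so that $\Gamma/I$ is a genuine $k$-algebra. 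Thus $x \mapsto \phi_x$, where $\phi_x(\bar\lambda) = \overline{\lambda x}$, is a $k$-linear bijection $\Gamma/I \to E_\Lambda(\Lambda/I)$; checking composition shows $\phi_x \circ \phi_y = \phi_{yx}$, so this bijection is an algebra anti-isomorphism and $E_\Lambda(M) \cong (\Gamma/I)^{\mathrm{op}}$.

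Finally I would invoke the (elementary) fact that a symmetrizing form $\lambda$ for an algebra $A$, satisfying $\lambda(ab) = \lambda(ba)$, is also a symmetrizing form for $A^{\mathrm{op}}$, so $A$ is symmetric iff $A^{\mathrm{op}}$ is; this is also contained in the remark of section 2 that the four conditions are left/right symmetric. Applying it with $A = \Gamma/I$ and using that $E_\Lambda(M) \cong (\Gamma/I)^{\mathrm{op}}$ is symmetric yields that $\Gamma/I$ is symmetric, which completes the argument. I do not anticipate a real obstacle: the only points that need care are the reduction to $M$ cyclic, which is exactly what Proposition \ref{TopSoc} provides, and remembering that the identification of $E_\Lambda(\Lambda/I)$ with $\Gamma/I$ reverses multiplication, so that the conclusion is about $(\Gamma/I)^{\mathrm{op}}$ and hence about $\Gamma/I$.
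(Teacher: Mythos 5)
Your proposal is correct and follows essentially the same route as the paper: Proposition \ref{TopSoc} gives that $M$ is cyclic, hence $M \simeq \Lambda/I$, and evaluating endomorphisms at $\bar 1$ identifies $E_\Lambda(\Lambda/I)$ with $(\Gamma/I)^{\mathrm{op}}$, after which symmetry transfers across the opposite-algebra operation. The only difference is that you spell out the Nakayama-lemma step and the anti-isomorphism check, which the paper leaves implicit.
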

\begin{proof}
Proposition \ref{TopSoc} implies that $M$ is a cyclic $\Lambda$-module, and hence has the form $M \simeq \Lambda/I$ for some left ideal $I$. An endomorphism $f$ of $\Lambda/I$ is determined by $f(1+I) = x_f + I$ and $xx_f \in I$ for all $x \in I$. So $x_f \in \Gamma$ and the map $f \mapsto x_f+I$ yields an isomorphism $E_{\Lambda}(\Lambda/I) \simeq (\Gamma/I)^{\text{op}}$.
\end{proof}

For the case of $P$-groups, this result can be used to answer in the affirmative a question raised by the computations from section 3.

\begin{Cor}Suppose $P$ is a $p$-group. Then every indecomposable $kP$-module $M$ with symmetric endomorphism algebra satisfies $\dim M \leq |P|$, and there are infinitely many such modules whenever $P$ is non-cyclic.\end{Cor}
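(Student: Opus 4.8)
The plan is to prove the two assertions separately. For the dimension bound, I would simply observe that $kP$ is a local algebra (the group algebra of a $p$-group over a field of characteristic $p$), so Corollary \ref{interpretation} applies: an indecomposable $M$ with $E_{kP}(M)$ symmetric is of the form $M \simeq kP/I$ for a left ideal $I$, whence $\dim_k M = \dim_k kP - \dim_k I \leq \dim_k kP = |P|$. (Alternatively one may quote Proposition \ref{TopSoc} directly: $M/J(M) \simeq k$ forces $M$ to be cyclic.)

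For the infinitude statement, suppose $P$ is non-cyclic. First I would note that, by the Burnside basis theorem, $P/\Phi(P)$ is elementary abelian of rank $\geq 2$, so there is a surjection $\pi : P \twoheadrightarrow Q$ with $Q \simeq \mathbb{Z}_p \times \mathbb{Z}_p$. Next I would record that inflation along $\pi$ carries $kQ$-modules to $kP$-modules, preserves indecomposability, and identifies $\text{Hom}_{kQ}(M,N)$ with $\text{Hom}_{kP}(M,N)$ --- a $kP$-linear map is automatically $kQ$-linear because $\pi$ is onto --- so in particular it preserves the property that $E(M)$ is symmetric, and it sends non-isomorphic $kQ$-modules to non-isomorphic $kP$-modules. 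Thus the problem reduces to exhibiting infinitely many pairwise non-isomorphic indecomposable $kQ$-modules with symmetric endomorphism algebra.

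To that end I would write $Q = \langle a \rangle \times \langle b \rangle$ and, for each $\lambda \in k$, let $N_\lambda$ be the $2$-dimensional $kQ$-module on which $a-1$ acts as $E_{21}$ and $b-1$ acts as $\lambda E_{21}$; since $E_{21}^2 = 0$ and $k$ has characteristic $p$, the relations $a^p = b^p = 1$ and $ab = ba$ are respected, so $N_\lambda$ is well defined. Then $E_{kQ}(N_\lambda)$ is the centralizer of $E_{21}$ in $M_2(k)$, namely $k\,I + k\,E_{21} \simeq k[T]/(T^2)$; this is local, so $N_\lambda$ is indecomposable, and it is symmetric by Lemma \ref{Monogenic}. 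Finally, any isomorphism $N_\lambda \simeq N_{\lambda'}$ would be an invertible matrix commuting with $E_{21}$ and conjugating $\lambda E_{21}$ to $\lambda' E_{21}$, which forces $\lambda = \lambda'$; hence the $N_\lambda$ are pairwise non-isomorphic, and there are infinitely many since $k$ is infinite. (For $p = 2$ one could instead cite the band modules $M(ab^{-1},1,\lambda)$ analyzed in Section 3, but the construction above is uniform in $p$.)

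I do not expect a serious obstacle here. The only points needing care are (i) that inflation matches the two endomorphism algebras on the nose rather than merely up to Morita equivalence, and (ii) the routine bookkeeping that the $N_\lambda$ are indecomposable, have symmetric endomorphism algebra, and are mutually non-isomorphic; if anything is delicate it is simply recognizing that the reduction to $\mathbb{Z}_p \times \mathbb{Z}_p$ must be carried out through a quotient rather than a subgroup, since restriction to subgroups need not preserve indecomposability.
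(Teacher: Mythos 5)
Your proof is correct, and the second half takes a genuinely different route from the paper. The dimension bound is handled identically (via Corollary \ref{interpretation}). For the infinitude, the paper argues by minimal counterexample: inflation along $P \twoheadrightarrow P/Z(P)$ reduces to abelian $P$, and then for abelian $P$ it produces the modules non-constructively as $kP/I_z$ for annihilator ideals $I_z = l(z) = r(z)$, invoking a theorem of Nakayama for indecomposability and symmetry, a theorem of Renner to pass from finitely many isomorphism classes to finitely many ideals, and Jennings' theorem to conclude $P$ would be cyclic. You instead push the inflation all the way down to $Q = \mathbb{Z}_p \times \mathbb{Z}_p$ via the Frattini quotient and then exhibit an explicit one-parameter family $N_\lambda$ of two-dimensional modules (for $p=2$ these are exactly the band modules $M(ab^{-1},1,\lambda)$ from Section 3). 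Your verifications all check out: $N_\lambda$ is well defined since $E_{21}^2=0$, its endomorphism algebra is $kI + kE_{21} \simeq k[T]/(T^2)$, hence local and symmetric, and any $kQ$-isomorphism $N_\lambda \simeq N_{\lambda'}$ must centralize $E_{21}$ and so forces $\lambda = \lambda'$; and your observation that inflation along a surjection identifies endomorphism algebras on the nose (not merely up to Morita equivalence) is exactly the point that makes the reduction legitimate. Your argument is more elementary and self-contained, avoids the external citations entirely, and in fact proves something slightly stronger, namely that infinitely many such modules already occur in dimension two; what the paper's less explicit route buys, as the author remarks, is that it generalizes to centralizer algebras $kP^Q$.
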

\begin{proof}
That $\dim M \leq |P|$ is immediate from Corollary \ref{interpretation}. If the second statement is false, then let $P$ be a minimal counterexample. If $P$ is nonabelian then $\bar{P} = P/Z(P)$ is non-cyclic, and hence there are infinitely many non-isomorphic indecomposable $k\bar{P}$-modules with symmetric endomorphism algebras. Application of the inflation functor ${_{\bar{P}}\text{mod}} \rightarrow {_P\text{mod}}$ yields a contradiction. Therefore, $P$ is abelian. For $0 \not= z \in kP$ let $I_z = l(z) = r(z)$ where $l$ and $r$ denote the left and right annihilator of $(z)$ in $kP$. So Theorem 13 from \cite{NakayamaI} and Corollary \ref{interpretation} imply that $M_z = kP/I_z$ is an indecomposable $kP$-module with $kP/I_z \simeq E_P(M_z)^{\text{op}}$ symmetric. Note that $M_z \simeq M_w$ only for $(z) = (w)$ since $I_z = \text{Ann}(M_z)$. So if $\{ M_z \}$ has only finitely many isomorphism classes of $kP$-modules, then Theorem 6 from \cite{Renner} implies the existence of only finitely many ideals in $kP$. In turn, this implies that $kP$ is a principal ring, and in particular $J(kP)$ is principal. Since $kP$ is local this yields $\ell\ell(kP) = |P|$, and hence $P$ is cyclic by application of Jennings' Theorem for $p$-groups; the final contradiction.
\end{proof}

This proof generalizes to centralizer algebras $kP^Q$ with $P$ a $p$-group and $Q \leq P$, by using the results from \cite{AAA} on $\ell\ell(kP^Q)$. We already saw in Lemma \ref{notSym} one necessary condition for symmetry, when $\Lambda$ is a not necessarily local algebra. The next result is a corollary to this lemma.

\begin{Cor}Suppose $M_1$ and $M_2$ are non-isomorphic bricks (i.e. $E_{\Lambda}(M_i) \simeq k$). Then $\mathbb{E} = E_{\Lambda}(M_1 \oplus M_2)$ is symmetric iff $\text{Hom}_{\Lambda}(M_1,M_2) = \text{Hom}_{\Lambda}(M_2,M_1) = 0$.\end{Cor}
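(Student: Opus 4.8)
The plan is to handle the two implications separately, the forward one via Lemma \ref{notSym}.

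For the $(\Leftarrow)$ direction, if $\text{Hom}_{\Lambda}(M_1,M_2) = \text{Hom}_{\Lambda}(M_2,M_1) = 0$ then the matrix description (\ref{decomp}) collapses to $\mathbb{E} \simeq E_{\Lambda}(M_1) \times E_{\Lambda}(M_2) \simeq k \times k$, and since $k$ is symmetric and a finite product of symmetric algebras is symmetric, $\mathbb{E}$ is symmetric.

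For $(\Rightarrow)$, I would suppose $\mathbb{E}$ is symmetric and, because the hypothesis is symmetric in $M_1$ and $M_2$, argue that $\text{Hom}_{\Lambda}(M_2,M_1) = 0$; the vanishing of the other Hom space then follows by interchanging the roles of the two modules. Assume for contradiction that $0 \neq \beta \in \text{Hom}_{\Lambda}(M_2,M_1)$. The crux is to show that $\gamma\beta = 0$ for every $\gamma \in \text{Hom}_{\Lambda}(M_1,M_2)$. Such a composite lies in $E_{\Lambda}(M_2) = k\,\mathrm{id}_{M_2}$ because $M_2$ is a brick, so if it were nonzero it would be an automorphism of $M_2$; then $\beta$ would admit a left inverse, exhibiting $M_2$ as a direct summand of $M_1$, which is impossible since a brick is indecomposable and $M_1 \not\simeq M_2$. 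Hence $\beta^*(\text{Hom}_{\Lambda}(M_1,M_2)) = 0$ in the notation of Lemma \ref{notSym}, and that lemma forces $\mathbb{E}$ to be non-symmetric, contradicting our assumption. Therefore $\text{Hom}_{\Lambda}(M_2,M_1) = 0$, and symmetrically $\text{Hom}_{\Lambda}(M_1,M_2) = 0$.

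The argument is short and I do not anticipate a real obstacle; the one point requiring care is the translation step, namely checking that ``$\gamma\beta = 0$ for all $\gamma \in \text{Hom}_{\Lambda}(M_1,M_2)$'' is exactly the hypothesis ``$\beta^*(\text{Hom}_{\Lambda}(M_1,M_2)) = 0$'' of Lemma \ref{notSym}, together with the standard facts that a nonzero endomorphism of a brick is invertible and that a brick is indecomposable.
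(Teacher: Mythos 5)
Your proof is correct and follows essentially the same route as the paper: both directions rest on the decomposition (\ref{decomp}) and on Lemma \ref{notSym}, with the backward direction being the identical product-of-fields observation. The only difference is cosmetic: where the paper splits into cases according to whether some composite $\beta\alpha$ is nonzero and then compares dimensions, you note directly that a nonzero $\beta \in \text{Hom}_{\Lambda}(M_2,M_1)$ with $\gamma\beta \neq 0$ for some $\gamma$ would be a split monomorphism exhibiting $M_2$ as a summand of the indecomposable $M_1$ --- a slightly cleaner verification of the lemma's hypothesis.
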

\begin{proof}
If $\text{Hom}_{\Lambda}(M_1,M_2) = \text{Hom}_{\Lambda}(M_2,M_1) = 0$ then $\mathbb{E} \simeq E_{\Lambda}(M_1) \times E_{\Lambda}(M_2) \simeq k \times k$ is semisimple and hence symmetric. Suppose either $\text{Hom}_{\Lambda}(M_1,M_2)$ or $\text{Hom}_{\Lambda}(M_2,M_1)$ is nonzero, say with $0 \not= \alpha \in \text{Hom}_{\Lambda}(M_2,M_1)$. If there is $\beta \in \text{Hom}_{\Lambda}(M_1,M_2)$ with $\beta\alpha \not= 0$, then $\beta\alpha$ is an automorphism, so that $\beta$ is surjective and hence $\dim M_1 > \dim M_2$ since $M_1 \not\simeq M_2$. Then $\gamma\beta$ is not an automorphism of $M_2$ for $\gamma \in \text{Hom}_{\Lambda}(M_2,M_1)$, and hence $\gamma\beta = 0$. So $\E$ is not symmetric by Lemma \ref{notSym}; the same holds true if $\beta\alpha = 0$ for all $\beta \in \text{Hom}_{\Lambda}(M_1,M_2)$. The proof is complete.
\end{proof}

For an example of this proposition, we could take $\Lambda$ to be a hereditary algebra with finite representation type. In general, given an algebra ${\Lambda}$, the determination of which $\Lambda$-modules have symmetric endomorphism algebras is a non-trivial problem, as demonstrated by the next proposition.

\begin{prop}An algebra $\Lambda$ is such that $E_{\Lambda}(M)$ is symmetric for all $M \in {_{\Lambda}\text{mod}}$ if and only if ${\Lambda}$ is semisimple.\end{prop}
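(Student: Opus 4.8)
The plan is to treat the two implications separately, the nontrivial one resting on Lemma \ref{notSym}. For the \emph{if} direction, suppose $\Lambda$ is semisimple. Then every $M \in {_{\Lambda}\text{mod}}$ is semisimple, say $M \simeq \bigoplus_i S_i^{\oplus n_i}$ with the $S_i$ pairwise non-isomorphic simple modules; since $k$ is algebraically closed, Schur's Lemma gives $E_{\Lambda}(M) \simeq \prod_i M_{n_i}(k)$. Each matrix algebra $M_{n_i}(k)$ is symmetric, and a finite direct product of symmetric algebras is symmetric, so $E_{\Lambda}(M)$ is symmetric, as recorded in section 2. This disposes of one direction.

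For the \emph{only if} direction I would argue contrapositively: assuming $\Lambda$ is not semisimple, I will exhibit a single module $M$ with $E_{\Lambda}(M)$ not symmetric. Since $\Lambda$ is not semisimple, $J := J(\Lambda) \neq 0$, so the nonzero finite-dimensional left module ${_{\Lambda}J}$ contains a simple submodule $S$; thus $S$ is a left ideal of $\Lambda$ lying inside $J$. Take $M_1 = {_{\Lambda}\Lambda}$, $M_2 = S$, and let $\beta \in \text{Hom}_{\Lambda}(S,\Lambda)$ be the inclusion, so $\beta \neq 0$. The key point is that every $\alpha \in \text{Hom}_{\Lambda}(\Lambda, S)$ is right multiplication by $\alpha(1_{\Lambda}) \in S$, whence $\alpha(j) = j\,\alpha(1_{\Lambda}) = 0$ for all $j \in J$ because $J(\Lambda)$ annihilates the simple module $S$; so $\alpha$ vanishes on $J \supseteq S = \operatorname{Im}(\beta)$, that is, $\beta^*(\text{Hom}_{\Lambda}(\Lambda, S)) = 0$. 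Lemma \ref{notSym} then shows $E_{\Lambda}(\Lambda \oplus S)$ is not symmetric, and $M = \Lambda \oplus S$ is the desired module. (One may also note $M_1 \not\simeq M_2$, since $\dim_k \Lambda > \dim_k J(\Lambda) \geq \dim_k S$, so the block-matrix description of $E_{\Lambda}(M_1 \oplus M_2)$ invoked in the proof of Lemma \ref{notSym} applies cleanly.)

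The argument is genuinely short, and there is no serious obstacle: the only step requiring attention is verifying the hypothesis of Lemma \ref{notSym}, namely that every homomorphism ${_{\Lambda}\Lambda} \to S$ kills the chosen simple submodule $S \subseteq J(\Lambda)$, which is exactly the point at which non-semisimplicity ($J(\Lambda) \neq 0$) is used. In effect the crux is just the choice of test module; once one takes $\Lambda \oplus S$, everything reduces to Lemma \ref{notSym} together with the one-line computation above.
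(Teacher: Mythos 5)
Your proof is correct, and the ``only if'' direction is genuinely streamlined compared to the paper's. Both arguments hinge on Lemma \ref{notSym} applied to a test module of the shape ${_{\Lambda}\Lambda} \oplus S$ with $S$ a simple left ideal, but they differ in how $S$ is chosen and how the vanishing hypothesis $\alpha\beta = 0$ is verified. The paper first passes to the basic algebra, uses that $\Lambda \simeq E_{\Lambda}({_{\Lambda}\Lambda})^{\text{op}}$ is itself symmetric to see that $\text{Soc}(\Lambda)$ is a two-sided ideal, and then selects $S \subseteq \text{Soc}(\Lambda)$ isomorphic to $\Lambda/N$ for a carefully chosen maximal left ideal $N$ containing $S$; the point of this care is that $\text{Soc}(\Lambda)$ need not lie in $J(\Lambda)$ (think of a simple projective summand), so one must arrange $S \subseteq \text{Ker}(f)$ for every $f : \Lambda \to S$ by hand. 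You sidestep all of this by taking $S$ to be any simple submodule of $J(\Lambda)$ and observing that every $\alpha : {_{\Lambda}\Lambda} \to S$ is right multiplication by $\alpha(1) \in S$, hence kills $J(\Lambda) \supseteq S$ because the radical annihilates simple modules. This buys you an argument that needs neither the reduction to the basic case, nor the symmetry of $\Lambda$ itself, nor the dimension count $\text{Hom}_{\Lambda}(\Lambda,S) \simeq k$; the only input is $J(\Lambda) \neq 0$ and Lemma \ref{notSym}. Your parenthetical remark that $\Lambda \not\simeq S$ is not actually needed for the lemma (its proof only uses the $2\times 2$ Hom decomposition, which holds for any pair of modules), but it does no harm.
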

\begin{proof}
Suppose $E_{\Lambda}(M)$ is symmetric for all $M \in {_{\Lambda}\text{mod}}$. By the remarks following Theorem \ref{localAlgebra} on Morita equivalences and the Morita invariance of semisimple algebras, we may assume that $\Lambda$ is basic. Note that ${\Lambda} \simeq E_{\Lambda}({_{\Lambda}{\Lambda}})^{\text{op}}$ is symmetric and so $\text{Soc}({\Lambda})$ is an ideal in ${\Lambda}$. If ${\Lambda}$ is not semisimple, then there is a maximal ideal $M$ of ${\Lambda}$ containing $\text{Soc}({\Lambda})$. Let $N$ be a maximal left ideal of ${\Lambda}$ containing $M$, let $S$ be a submodule of $\text{Soc}({\Lambda})$ isomorphic with ${\Lambda}/N$, and denote the inclusion $S \hookrightarrow {\Lambda}$ by $\iota$. Observe that $\text{Hom}_{\Lambda}(\Lambda,S) \simeq \text{Hom}_{\Lambda}(\Lambda/J(\Lambda),S) \simeq k$ since $\Lambda/J(\Lambda) \simeq \bigoplus_{T \in \text{Irr}(\Lambda)} T$. So if $f : {\Lambda} \rightarrow S$ is a homomorphism, then $f$ factors through the projection $\Lambda \twoheadrightarrow \Lambda/N$ and hence $S \subseteq N \subseteq \text{Ker}(f)$ so that $f\iota = 0$. We conclude by Lemma \ref{notSym} that $E_{\Lambda}(S \oplus {_{\Lambda}{\Lambda}})$ is not symmetric; a contradiction that shows that ${\Lambda}$ is semisimple. Conversely, if ${\Lambda}$ is semisimple then $E_{\Lambda}(M)$ is semisimple and hence symmetric whenever $M \in {_{\Lambda}\text{mod}}$.
\end{proof}

Lastly, as a simple example, we provide the following application of our previous results.

\begin{prop}Suppose $G$ is a $p$-nilpotent group with cyclic Sylow $p$-subgroup and $M$ a $kG$-module. Then $E_G(M)$ is symmetric if and only if $e_BM$ is an isotypic $B$-module for every block idempotent $e_B$.\end{prop}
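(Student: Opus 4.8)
The plan is to decompose the problem over the blocks of $kG$ and then appeal to the analysis of cyclic $p$-groups in Proposition~\ref{CyclicModules}. Writing $1 = \sum_B e_B$ as the sum of the block idempotents of $kG$, we have $M = \bigoplus_B e_B M$, and since $\mathrm{Hom}_{kG}(e_B M, e_{B'} M) = 0$ for distinct blocks $B, B'$, it follows that $E_G(M) \simeq \prod_B E_B(e_B M)$ as algebras. A finite product of algebras is symmetric precisely when every factor is (as recalled in section~2), so the assertion reduces to showing, for each block $B$ of $kG$, that $E_B(e_B M)$ is symmetric if and only if $e_B M$ is an isotypic $B$-module.

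The structural heart of the argument is the claim that, because $G$ is $p$-nilpotent with cyclic Sylow $p$-subgroup, every block $B$ of $kG$ is isomorphic to $M_e(k[T]/(T^{p^a}))$ for suitable $e \geq 1$ and $a \geq 0$. To obtain this, set $N = O_{p'}(G)$, so that $G/N \cong P$ is cyclic, and let $b$ be a block of $kN$ covered by $B$. Since $N$ is a $p'$-group, $kN$ is semisimple and $b = E_k(M_b)$ for a unique simple $kN$-module $M_b$; let $Q = T_G(b) = T_G(M_b)$ be its inertia group. Then $Q/N$ is a subgroup of the cyclic group $G/N$, hence cyclic of order $p^a$ for some $a$. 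As the Schur multiplier of a cyclic group is trivial (equivalently, $H^2(Q/N,k^\times) = 0$ since $k$ is algebraically closed), $M_b$ extends to a simple $kQ$-module, and therefore the unique block $b'$ of $kQ$ covering $b$ satisfies $b' \simeq E_k(M_b) \otimes_k k(Q/N) \simeq M_d(k[T]/(T^{p^a}))$ with $d = \dim_k M_b$. By Fong's first reduction (Clifford theory for blocks), induction from $Q$ to $G$ is a Morita equivalence between $b'$ and $B$; since $b'$ has local basic algebra $k[T]/(T^{p^a})$, so does $B$, and realising $B$ as the endomorphism algebra of a progenerator over this basic algebra then yields $B \simeq M_e(k[T]/(T^{p^a}))$ for some $e \geq 1$.

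It remains to combine this with the earlier results. Since $k[T]/(T^{p^a})$ is isomorphic to the group algebra of a cyclic $p$-group of order $p^a$, Proposition~\ref{CyclicModules} tells us that $E_{k[T]/(T^{p^a})}(V)$ is quasi-Frobenius if and only if $V$ is isotypic, and that it is symmetric whenever it is quasi-Frobenius; as a symmetric algebra is in particular quasi-Frobenius, this yields that $E_{k[T]/(T^{p^a})}(V)$ is symmetric if and only if $V$ is isotypic. The conditions that an endomorphism algebra $E_\Lambda(V)$ be symmetric and that $V$ be isotypic are each invariant under Morita equivalence --- the former by the remarks following Theorem~\ref{localAlgebra}, the latter because an equivalence preserves decompositions into indecomposables together with their isomorphism types. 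Transporting these along $B \simeq M_e(k[T]/(T^{p^a}))$, we conclude that $E_B(e_B M)$ is symmetric if and only if $e_B M$ is an isotypic $B$-module, and forming the product over all blocks $B$ completes the proof.

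I expect the main obstacle to be the block-structure step of the second paragraph: recognising that the inertia quotient $Q/N$ is cyclic, using the vanishing of its Schur multiplier to extend $M_b$ to $Q$, and invoking Fong's reduction to transfer the conclusion from $b'$ to $B$. Once $B$ has been identified as a matrix algebra over $k[T]/(T^{p^a})$, the remainder is a routine assembly of Proposition~\ref{CyclicModules}, Theorem~\ref{localAlgebra}, and the Morita invariance recorded at the end of section~5.
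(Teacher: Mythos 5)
Your proof is correct and follows essentially the same route as the paper: decompose $E_G(M)\simeq\prod_B E_B(e_BM)$, identify each block of the $p$-nilpotent group $G$ as a matrix algebra over the group algebra of its (cyclic) defect group, and transport Proposition \ref{CyclicModules} across the Morita equivalence. The only difference is that you re-derive the block structure theorem $B\simeq M_n(kD)$ via Clifford theory and Fong--Reynolds, whereas the paper simply invokes it; your derivation of that step is sound.
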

\begin{proof}
If $B$ is a block of $kG$ then $B \simeq M_n(kD)$ for some $n \geq 1$ where $D$ is the defect group of $B$. Let $\mathcal{F}_B : {_B\text{mod}} \rightarrow {_D\text{mod}}$ be a Morita equivalence. Note that $E_G(M) \simeq \prod E_B(e_B M)$ and $E_B(e_BM)$ is symmetric precisely when $E_D(\mathcal{F}_B(e_BM))$ is symmetric. Isotopy is preserved under Morita equivalence and so the result follows by Proposition \ref{CyclicModules}.
\end{proof}

It would be interesting to see how far the methods in this paper might be pushed to analyze the symmetry of $E_G(M)$ for $G$ an arbitrary finite group and $M$ a $kG$-module. A natural starting point are the blocks with cyclic defect group. Recall that if $B$ is a block of $kG$ with cyclic defect group $D$, $Q$ the unique subgroup of $D$ with order $p$, $N = N_G(Q)$, and $b$ the unique block of $kN$ with defect group $D$ and $b^G = B$, then the Green correspondence provides a bijection between indecomposable non-projective $B$-modules and indecomposable non-projective $b$-modules. Moreover, $b$ is Nakayama with $|\text{Irr}(b)| = e$ for $e$ the inertial index of $B$ and $\ell\ell(P) = p$ whenever $P$ is an indecomposable projective $b$-module. So the results from section 4 apply to $b$. Studying the $B$-modules is now a natural way of determining how symmetry of endomorphism algebras passes through Green's correspondence.

\section{Acknowledgments}

The results from section 3 formed part of my Ph.D. thesis conducted at the University of Chicago under the direction of J. L. Alperin. I would like to thank him for his helpful supervision and constant encouragement. Theorem \ref{pNilpotent} was developed at NUI Maynooth during a research visit with John Murray, and I would like to thank the university for its generous support and John for several useful conversations.

\bibliographystyle{plain}
\bibliography{SymBib}

\end{document}